\newtheorem{satz}{Theorem}
\newtheorem{proposition}[satz]{Proposition}
\newtheorem{theorem}[satz]{Theorem}
\newtheorem{lemma}[satz]{Lemma}
\newtheorem{definition}[satz]{Definition}
\newtheorem{corollary}[satz]{Corollary}
\newtheorem{remark}[satz]{Remark}
\def\eps{\varepsilon}
\def\_phi{\varphi}
\def\a{\alpha}
\def\m{\times}
\def\C{{\mathbb C}}
\def\R{{\mathbb R}}
\def\E{\mathsf {E}}
\def\Z_N{{\mathbb Z}_N}
\def\Z{{\mathbb Z}}
\def\Gr{{\mathbf G}}
\def\D{{\mathbb D}}
\def\c{\circ}
\def\D{\Delta}
\author{Shkredov I.D.}
\title{ On a question of A. Balog
\footnote{
This work was supported by grant
Russian Scientific Foundation RSF 14--11--00433.}
}
\date{}
\begin{document}
\maketitle

\begin{center}
 Annotation.
\end{center}

{\it \small
    We give a partial answer to a conjecture of A. Balog, concerning the size of $AA+A$,
    where $A$ is a finite subset of real numbers.
    Also, we prove several new results on the cardinality of $A:A+A$, $AA+AA$ and $A:A + A:A$.
}
\\

\section{Introduction}
\label{sec:introduction}

Let $A\subset \R$ be  a finite set.
Define the \textit{sumset}, and respectively the \textit{product set}, by
$$A+A:=\{a+b:a,b\in{A}\}$$
and
$$AA:=\{ab:a,b\in{A}\}.$$
The Erd\H{o}s-Szemer\'{e}di \cite{ES} conjecture states, for all $\epsilon>0$,
$$\max{\{|A+A|,|AA|\}}\gg{|A|^{2-\epsilon}} \,.$$
Loosely speaking, the conjecture says that any set of reals (or integers) cannot be highly structured both in a multiplicative and additive sense.
The best result in the direction is due to Solymosi \cite{soly}.

\begin{theorem}
    Let $A\subset \R$ be a set.
    Then
$$
    \max\{ |A+A|, |AA| \} \gg |A|^{\frac{4}{3}} \log^{-\frac{1}{3}} |A| \,.
$$
\label{t:Solymosi_intr}
\end{theorem}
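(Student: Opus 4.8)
The plan is to follow Solymosi's multiplicative-energy argument. First I would reduce to the case $A \subset \R_{>0}$: discarding $0$ and replacing $A$ by whichever of its positive or negative part is larger (negating if necessary) costs only a constant factor in $|A|$ while not increasing $\max\{|A+A|,|AA|\}$. The central quantity is the multiplicative energy $\E^{\times}(A)$, the number of quadruples $(a,b,c,d)\in A^4$ with $ab=cd$. By Cauchy--Schwarz applied to the product representation function $r(x)=\#\{(a,b):ab=x\}$, using $\sum_x r(x)=|A|^2$ over $x\in AA$, one gets $\E^{\times}(A)=\sum_x r(x)^2 \gs |A|^4/|AA|$; so it suffices to prove the upper bound $\E^{\times}(A) \ll |A+A|^2 \log|A|$.

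To obtain this upper bound I would view $A\m A$ as a point set in the plane and sort its points by slope through the origin. Writing $s_1 < \dots < s_k$ for the distinct slopes (the elements of the ratio set $A/A$) and $n_i$ for the number of points of $A\m A$ on the ray of slope $s_i$, we have $\sum_i n_i = |A|^2$ and $\sum_i n_i^2 = \E^{\times}(A)$. The geometric heart of the argument is the following: for two rays of distinct slopes $s_i < s_j$, the sum $P+Q$ of a point $P$ on the first and $Q$ on the second is injective in $(P,Q)$ (two non-parallel lines meet once) and has slope strictly between $s_i$ and $s_j$; moreover every such sum lies in $(A+A)\m(A+A)$.

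Consequently, if I choose pairs of rays whose slope-intervals are disjoint, the resulting families of sums occupy disjoint open angular sectors inside $(A+A)\m(A+A)$, so the total count is at most $|A+A|^2$. To exploit this against $\sum_i n_i^2$ I would dyadically pigeonhole: for each range $\Delta \le n_i < 2\Delta$ collect the relevant rays in increasing slope order and pair them consecutively as $(1,2),(3,4),\dots$. Consecutive-within-class pairs have disjoint slope-intervals, so injectivity plus sector-disjointness give $(\text{number of pairs})\cdot \Delta^2 \ll |A+A|^2$; hence this dyadic class contributes $O(|A+A|^2)$ to $\sum_i n_i^2$. Summing over the $O(\log|A|)$ dyadic ranges yields $\E^{\times}(A) \ll |A+A|^2\log|A|$.

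Combining the two estimates gives $|A|^4/|AA| \ll |A+A|^2\log|A|$, i.e. $|A|^4 \ll |A+A|^2|AA|\log|A| \le \max\{|A+A|,|AA|\}^3 \log|A|$, which is exactly the claimed bound. The main obstacle, and the genuinely clever point, is establishing and correctly using the sector-disjointness: one must verify that the open angular sectors cut out by pairs with disjoint slope-intervals really are pairwise disjoint and that each sum falls into $(A+A)\m(A+A)$. The dyadic pigeonholing is then a routine device that converts the ``adjacent products'' estimate into a bound on the full energy at the cost of a single logarithm, which is precisely the source of the $\log^{-1/3}|A|$ loss in the final inequality.
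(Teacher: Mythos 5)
Your proof is correct, and it is essentially the argument of the cited source: the paper itself does not prove this theorem but quotes it from Solymosi's paper, and your reconstruction is precisely Solymosi's multiplicative-energy argument. The key intermediate bound you derive, $\E^{\times}(A) \ll |A+A|^2 \log |A|$, is exactly what the paper later records (without proof) as Theorem \ref{t:Solymosi}, formula (\ref{f:Solymosi_E}) with $B=A$; your geometric core (injectivity of sums from non-parallel rays, sums landing strictly inside the open angular sector, disjointness of sectors for slope-disjoint pairs, then dyadic pigeonholing) is the standard and correct route. The only point worth tightening is the bookkeeping for a dyadic class containing a single ray (or an odd leftover ray), whose contribution must be absorbed via $\Delta \le |A| \le |A+A|$ --- a one-line fix, not a gap.
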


If one consider the set
$$
    AA+A = \{ ab+c ~:~ a,b,c\in A \}
$$
then the Erd\H{o}s-Szemer\'{e}di conjecture implies that $AA+A$ has size at least $|A|^{2-\eps}$
(we  assume for simplicity that $1\in A$).
In \cite{balog} Balog
formulated a weaker hypothesis that
for all $\eps>0$ one has
$$
    |AA+A| \gg |A|^{2-\eps} \,.
$$
In the paper he proved the following result, which implies, in particular,
 $|A A+A|\gg |A|^{3/2}$ and $|A A+A A|\gg |A||A/A|^{1/2}$.

\begin{theorem}
For every finite sets of reals  $A,B,C,D \subset \R$, we have
\begin{equation}\label{f:Balog1}
    |A C+ A||B C+ B|\gg |A||B||C| \,,
\end{equation}
and
\begin{equation}\label{f:Balog2}
    |A C+ A D||B C+ B D|\gg |B/A||C||D| \,.
\end{equation}
More precisely, see  \cite{SS1}
$$|(A\times B) \cdot \D(C)+ A\times B|\gg |A||B||C|\,$$
and
$$|(A\times B) \cdot \D(C)+ (A\times B) \cdot \D(D)|\gg |B/A||C||D|\,,$$
where
$$
    \D (A) := \{ (a,a) ~:~ a \in A \} \,.
$$
\label{t:Balog}
\end{theorem}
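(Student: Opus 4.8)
The plan is to deduce both inequalities from their refined two-dimensional forms, which I prove by the geometric sorting argument underlying Solymosi's Theorem~\ref{t:Solymosi_intr}. First note that each coarse estimate follows from its refined counterpart by coordinate projection: the set $(A\m B)\cdot\D(C)+A\m B$ consists of the pairs $(ac+a',\,bc+b')$, whose first and second coordinates range over $AC+A$ and $BC+B$, so it embeds into $(AC+A)\m(BC+B)$; likewise $(A\m B)\cdot\D(C)+(A\m B)\cdot\D(D)$ embeds into $(AC+AD)\m(BC+BD)$. Hence it suffices to bound the planar sets from below.

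The key observation is that all point sets involved lie on the same pencil of lines through the origin. A point $(ac,bc)$ has slope $b/a\in B/A$, and so does a point $(a,b)\in A\m B$; writing $B/A=\{r_1<\dots<r_k\}$ with $k=|B/A|$ and $\ell_{r}=\{y=rx\}$, put $A_r=\{a\in A:ra\in B\}$, so that $\sum_r|A_r|=|A||B|$. Since a point on a fixed line through the origin is determined by its first coordinate, I get $|(A\m B)\cdot\D(C)\cap\ell_r|=|C\cdot A_r|\gs|C|$, while $|(A\m B)\cap\ell_r|=|A_r|$ and $|(A\m B)\cdot\D(D)\cap\ell_r|=|D\cdot A_r|\gs|D|$. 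I will invoke Solymosi's sorting lemma in the following form: after splitting $A,B,C,D$ into sign classes and passing to the configuration in one open quadrant (a bounded loss), if $p$ lies on $\ell_{r_i}$ and $q$ on the neighbouring line $\ell_{r_{i+1}}$ then $p+q$ falls strictly inside the sector bounded by these two lines; the sectors attached to distinct consecutive pairs are interior-disjoint, and on a fixed pair the sum determines both summands because the two line-directions are independent. Thus for any families $X,Y$ supported on the pencil, $|X+Y|\gs\sum_{i=1}^{k-1}|X\cap\ell_{r_i}|\,|Y\cap\ell_{r_{i+1}}|$.

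Applying this with $X=(A\m B)\cdot\D(C)$ and $Y=A\m B$ yields $|X+Y|\gs|C|\sum_{i=2}^{k}|A_{r_i}|=|C|(|A||B|-|A_{r_1}|)$, and the symmetric pairing gives $|C|(|A||B|-|A_{r_k}|)$; since $\ell_{r_1}\ne\ell_{r_k}$ forces $|A_{r_1}|+|A_{r_k}|\le|A||B|$, one of the two bounds is at least $\tfrac12|C||A||B|$, which proves \eqref{f:Balog1}. For \eqref{f:Balog2} I take $X=(A\m B)\cdot\D(C)$ and $Y=(A\m B)\cdot\D(D)$; now each factor in the sum is at least $|C|$ and $|D|$ respectively, so $|X+Y|\gs\sum_{i=1}^{k-1}|C||D|=(k-1)|C||D|\gg|B/A||C||D|$.

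The main obstacle is making the sorting step fully rigorous: one must check that sums genuinely separate by sector, which forces the reduction to a single quadrant and the attendant sign bookkeeping, and that no two distinct line-pairs produce a common point. The degenerate cases, such as $0\in A\cup B\cup C\cup D$ or $k=1$ (which forces $A,B$ to be singletons), are trivial and are absorbed into the implied constants.
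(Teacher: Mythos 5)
Your core argument is the right one, and it is exactly the mechanism behind the cited result: the paper does not prove Theorem~\ref{t:Balog} (it quotes it from \cite{balog} and \cite{SS1}), but the argument it recalls inside the proof of Theorem~\ref{t:main_balog} --- points of a Cartesian product sorted onto the pencil of lines $y=rx$, dilation by $\D(\cdot)$ preserving each line, sums of points on consecutive lines landing injectively in pairwise disjoint open sectors --- is precisely your sorting lemma, and your bookkeeping ($\sum_r|A_r|=|A||B|$, the two opposite pairings to kill the endpoint term, $(k-1)\gg k$ for $k\ge 2$) is correct.

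There is, however, a genuine gap in the sign reduction, which you dismiss as ``a bounded loss.'' The sector argument needs $X$ and $Y$ to lie in the \emph{same} open quadrant: if $p\in\ell_{r_i}$ and $q\in\ell_{r_{i+1}}$ lie in opposite quadrants, $p+q$ is a positive combination of the directions $(1,r_i)$ and $-(1,r_{i+1})$, so it sweeps a region of angular width $\pi-(\theta_{i+1}-\theta_i)>\pi/2$; these regions overlap heavily across $i$, and the disjointness (hence the summation over $i$) fails. Now take \eqref{f:Balog1} with $A,B\subset\R^{+}$ and $C\subset\R^{-}$: then $Y=A\times B$ lies in quadrant I while $X=(A\times B)\cdot\D(C)$ lies in quadrant III, and \emph{no} choice of sign classes fixes this, since $A$ and $B$ have no negative parts to switch to. Similarly for \eqref{f:Balog2} with $C\subset\R^{+}$, $D\subset\R^{-}$. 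In these cases your statement $|X+Y|\gs\sum_i|X\cap\ell_{r_i}||Y\cap\ell_{r_{i+1}}|$ is simply unavailable; what you would need is the \emph{difference} analogue (e.g.\ $|AC'-A||BC'-B|\gg|A||B||C'|$ for positive sets), which the sorting argument does not yield --- differences of points on consecutive lines do not separate by sector. This is why results of this type are usually stated and proved for positive reals (as in Balog's original paper, and note this paper itself restricts to $A\subseteq\R^{+}$ in Lemma~\ref{l:d(AA)}), while mixed-sign or difference versions require a different tool, such as the incidence/energy machinery behind Theorem~\ref{t:main_diff}, which handles $|A\pm A_*|$ simultaneously. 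So either restrict the claim to $\R^{+}$ (matching the actual content of the cited theorem), or supply a separate argument for the opposite-quadrant configurations; as written, the proof does not cover the statement for arbitrary real sets.
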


In \cite{B_RN_S} the authors have obtained a partial answer to a "dual"\, question
on the size of $A(A+A)$.
The main result of the paper is the following new bound for $A:A+A$, $AA+A$, more precisely, see Theorem \ref{t:main_balog} below.

\begin{theorem}
    Let $A\subset \R$ be a set.
    Then there is $\eps_1>0$ such that
\begin{equation}\label{f:main_balog1_intr}
    | A:A + A | \gg |A|^{3/2+\eps_1} \,.
\end{equation}
    Moreover, there is $\eps_2>0$ with
\begin{equation}\label{f:main_balog2_intr}
    |AA + A| \gg |A|^{3/2+\eps_2} \,,
\end{equation}
    provided by $|A:A| \ll |AA|$.
\label{t:main_balog_intr}
\end{theorem}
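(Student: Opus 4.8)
The plan is to beat the threshold exponent $3/2$ for $|A:A+A|$ (and, under the stated hypothesis, $|AA+A|$) by combining a Balog-type multiplicative incidence bound with a sum-product dichotomy that extracts genuine \emph{additional} energy savings. The quantity $3/2$ arises as the ``trivial'' exponent from Theorem~\ref{t:Balog}: setting $B=C=A$ in \eqref{f:Balog1} gives $|AA+A|^2 \gg |A|^3$, hence $|AA+A| \gg |A|^{3/2}$, and the analogous manipulation with ratios handles $A:A+A$. To gain the extra $\eps_1$ I expect one must show that equality in this chain can only be approached when $A$ is simultaneously close to extremal for \emph{several} competing sum-product inequalities, which is impossible. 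So the strategy is to assume for contradiction that $|A:A+A| \ll |A|^{3/2+\eps_1}$ for all small $\eps_1$ and derive strong multiplicative structure on $A$.

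First I would fix the multiplicative-energy language. Write $\E^{\times}(A) = |\{(a,b,c,d) : ab^{-1} = cd^{-1}\}|$ for the multiplicative energy and let $\E^{+}(A)$ be the additive energy. The key observation is that a small value of $|A:A+A|$ forces, by Cauchy--Schwarz applied to the representation function of elements of $A:A+A$, a large additive-multiplicative hybrid energy $\sum_x r_{A:A}(x)\, r_{A-A}(\cdot)$-type count; quantitatively I would bound the number of collinear or ``multiplicatively aligned'' quadruples from below. Concretely, realize $A:A+A$ as the image of $A \times A \times A$ under $(a,b,c) \mapsto a/b + c$ and use a Plünnecke--Ruzsa/Elekes-style incidence argument (the Szemer\'edi--Trotter theorem on the points $(a/b, c)$ against a pencil of lines) to convert a small image into a large number of incidences, which in turn forces either $|A:A|$ or the additive structure of $A$ to be small.

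The main technical engine I would deploy is Theorem~\ref{t:Balog} in its sharp bilinear form, i.e.\ the statement $|(A\times B)\cdot\D(C) + A\times B| \gg |A||B||C|$, specialized and then \emph{iterated}. The idea is that a single application yields exponent $3/2$, but feeding the output back in --- using that the set achieving near-equality must have $A:A$ of controlled size, and then applying a second, transversal sum-product estimate (Solymosi's Theorem~\ref{t:Solymosi_intr}, or a Szemer\'edi--Trotter bound on the energy) to that smaller set --- produces a strictly better exponent. This is precisely where the hypothesis $|A:A|\ll|AA|$ enters for the $|AA+A|$ bound in \eqref{f:main_balog2_intr}: it lets one transfer the ratio-set estimate for $A:A+A$ to the product-shifted set $AA+A$ at no loss, since the two multiplicative structures are comparable.

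The hard part, and the genuine content of the $\eps_1$ gain, will be closing the dichotomy quantitatively: one must show that the configuration forcing $|A:A+A|$ near $|A|^{3/2}$ simultaneously forces $A$ to have both near-maximal multiplicative energy and near-maximal additive energy on a large subset, and then invoke a sum-product theorem to contradict this. I anticipate the bookkeeping of the two energies through a weighted Szemer\'edi--Trotter incidence count to be delicate, in particular extracting an explicit (even if tiny) $\eps_1$ rather than a qualitative $o(1)$; a popularity/refinement step (passing to a large subset $A' \subset A$ on which the relevant representation functions are flat) will be needed before the incidence bound can be applied cleanly, and controlling the loss incurred in that refinement is where I expect the delicacy to concentrate.
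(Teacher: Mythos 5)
Your opening move coincides with the paper's skeleton: assume $|A:A+A| \ll M|A|^{3/2}$ with $M=|A|^{\eps}$, run the Balog--Solymosi line construction on $A^2$, extract multiplicative structure, and feed it into a second estimate. But both of your closing steps fail. The structure you propose to extract --- ``$A:A$ of controlled size'' together with ``near-maximal multiplicative \emph{and} additive energy on a large subset'' --- is not a consequence of the hypothesis: an upper bound of the form $|A:A+A| \ll |A|^{3/2+\eps}$ yields, via Cauchy--Schwarz applied to whatever sumsets embed into $A:A+A$, energy lower bounds of order at most $|A|^{4}/|A|^{3/2}=|A|^{5/2}$, an order of magnitude short of the near-maximal $|A|^{3-o(1)}$ that your intended sum-product contradiction requires. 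And even if both energies were as large as the hypothesis permits, the tools you invoke cannot produce a contradiction at this scale: Solymosi's Theorem \ref{t:Solymosi_intr} operates at exponent $4/3<3/2$, and iterating Balog's bilinear bound (\ref{f:Balog1}) returns exponent $3/2$ again --- that is exactly why $3/2$ was the previous record, and no bootstrap of these two inputs alone gains a power of $|A|$.

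What the paper actually extracts, and what it feeds it into, are both absent from your proposal. The line argument (\ref{tmp:21.01.2015_1})--(\ref{tmp:21.01.2015_2}) shows that smallness of $|A:A+A|$ forces $d(A)=\min_{C\ne\emptyset}|AC|^2/(|A||C|) \ll M^4|A:A|/|A|$; in other words some large popular fiber $C=A^{\times}_{q}$ has small product set with $A$. By Lemma \ref{l:d(A)}, small $d(A)$ makes $A$ a Szemer\'{e}di--Trotter set \emph{for addition} with an effective constant, and by Petridis's Theorem \ref{t:Petridis_C}, through Lemma \ref{l:d(AA)}, the same holds for $A:A$. The entire exponent gain then comes from Theorem \ref{t:main_diff} (Theorem 14 of \cite{s_sumsets}, a higher-energy $\E_3$/$\E_{3/2}$ result descending from \cite{ss2,SS1}) applied to the pair $(A,A:A)$: it gives $|A:A+A| \gg |A|^{14/9}M^{-32/9}\log^{-2/9}|A|$, whence $M \gg |A|^{1/82}\log^{-2/41}|A|$ and $\eps_1=1/82$ up to logarithms. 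Nothing of comparable strength appears in your toolkit, so no $\eps_1>0$ can be produced. Finally, your reading of the hypothesis $|A:A|\ll|AA|$ is off: it is not a lossless transfer between ratio and product structure. The $AA+A$ argument multiplies the lines by $\D(A)$ instead of $\D(A^{-1})$ and needs a H\"{o}lder step through $\E^{\times}_{3/2}(A)$, producing the bound (\ref{f:main_balog2}) with the factor $\left(|AA|/|A:A|\right)^{11/41}$; the hypothesis is required precisely to make that factor $\gg 1$.
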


Also, we prove several results on the cardinality of $AA+AA$ and $A:A + A:A$,
see Theorem \ref{t:AA+AA} and Proposition \ref{p:AA+AA_Solymosi} below.

In paper \cite{RN_Z} Roche--Newton and Zhelezov conjectured there exist absolute constants $c$, $c'$ such that for any finite $A\subset \C$
the following holds
$$
    \left| \frac{A+A}{A+A} \right| \le c|A|^2 \implies |A+A| \le c' |A| \,.
$$
Similar conjectures  were made for the sets $\frac{A-A}{A-A}$, $(A-A)(A-A)$, $A(A+A+A+A)$ and so on.
We finish the paper giving a partial answer to a variant of the  conjecture of Roche--Newton and Zhelezov
$$
    |(A+A)(A+A) + (A+A)(A+A)| \ll |A|^2 \implies |A \pm A| \ll |A| \log^{} |A| \,,
$$
see Corollary \ref{c:RN_Z}.

The main idea of the proof is
the following.
We need to estimate from below the sumset of two sets $A$ and $A:A$, say.
As in many problems of the type usual applications of Szemer\'{e}di--Trotter's theorem \cite{TV} or Solymosi's method \cite{balog} give us a lower bound of the form  $|A:A+A| \gg |A|^{3/2}$.
In paper \cite{ss2} the exponent $3/2$ was improved in the particular case of sumsets of convex sets.
After that the method was developed by several authors, see e.g. \cite{KR,Li,Li2,schoen_E_3,SS1,Sh_ineq,s_mixed,s_sumsets} and others.
In \cite{s_sumsets} the author proved that the bound $|A+B| \gg |A|^{3/2+c}$, $c>0$ takes place for wide class of {\it different} sets, having roughly comparable sizes.
For example, such bound holds if $A$ and $B$ have small multiplicative doubling.
It turns out that if (\ref{f:main_balog1_intr}) cannot be improved then there is some large set $C$ such that $|AC| \ll |A|$.
This allows us to apply results from \cite{s_sumsets}.

The author is grateful to  Tomasz Schoen for useful discussions.

\section{Notation}
\label{sec:definitions}

Let $\Gr$ be an abelian group and $+$ be the group operation.
In the paper we use the same letter to denote a set $S\subseteq \Gr$
and its characteristic function $S:\Gr \rightarrow \{0,1\}.$
By $|S|$ denote the cardinality of $S$.

Let $f,g : \Gr \to \C$ be two functions with finite supports.
Put
\begin{equation}\label{f:convolutions}
    (f*g) (x) := \sum_{y\in \Gr} f(y) g(x-y) \quad \mbox{ and } \quad
        (f\circ g) (x) := \sum_{y\in \Gr} f(y) g(y+x) \,.
\end{equation}

Let $A \subseteq \Gr$ be a set.
For any real $\a>0$ put
\begin{equation}\label{f:E_k_preliminalies_B}
    \E^{+}_\a (A)=\sum_{x\in \Gr} (A \c A)^{\a} (x)
\end{equation}
be {\it the higher energy}  of $A$.
In particular case $k=2$
we write
$\E^{+} (A) = \E^{+}_2 (A)$ and $\E(A,B)$ for $\sum_{x\in \Gr} (A \c A) (x) (B \c B) (x)$.
The quantity $\E^{+} (A)$ is called {\it the additive energy} of a set, see e.g. \cite{TV}.
For a sequence $s=(s_1,\dots, s_{k-1})$ put
$A^{+}_s = A \cap (A-s_1)\dots \cap (A-s_{k-1}).$
Then
$$
   \E^{+}_k (A) =  \sum_{s_1,\dots,s_{k-1} \in \Gr} |A^{+}_s|^2 \,.
$$
If we
have a
group $\Gr$ with a multiplication instead of addition  then we use symbol $\E^{\times}_\a (A)$ for the correspondent energy of a set $A$ and write $A^{\times}_s$ for $A \cap (As^{-1}_1)\dots \cap (As^{-1}_{k-1}).$
In the case of a unique operation we write just $\E_k (A)$, $\E(A)$ and $A_s$.

Let $A,B\subseteq \Gr$ be two finite sets. {\it The magnification ratio}
$R_B [A]$ of the pair $(A,B)$ (see e.g. \cite{TV}) is defined by
\begin{equation}\label{f:R_B[A]}
    R_B [A] = \min_{\emptyset \neq Z \subseteq A} \frac{|B+Z|}{|Z|} \,.
\end{equation}
A beautiful result on magnification ratio was proven by Petridis \cite{p}.

\begin{theorem}
    For any $A,B,C \subseteq \Gr$, we have
    \begin{equation}\label{f:Petridis_C}
        |B+C+X| \le R_{B} [A] \cdot |C+X| \,,
    \end{equation}
    where $X\subseteq A$ and $|B+X| = R_B [A] |X|$.
\label{t:Petridis_C}
\end{theorem}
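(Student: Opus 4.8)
The plan is to argue by induction on $|C|$, writing $K := R_B[A]$ and fixing the minimizer $X\subseteq A$ guaranteed by the hypothesis, so that $|B+X| = K|X|$ while $|B+Z| \ge K|Z|$ for every nonempty $Z \subseteq A$; in particular this lower bound holds for every $Z\subseteq X$. Since $\Gr$ is abelian we have $C+X = X+C$, so it suffices to prove $|B+X+C| \le K|X+C|$. First I would dispose of the base case $|C|=1$: if $C=\{\gamma\}$ then $B+X+C$ is merely a translate of $B+X$, whence $|B+X+C| = |B+X| = K|X| = K|X+C|$, so equality holds.

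For the inductive step I would peel off one element, writing $C = C' \cup \{\gamma\}$ with $\gamma \notin C'$ and $|C'| = |C|-1$, so that $B+X+C = (B+X+C') \cup (B+X+\gamma)$. The crucial device is to introduce the subset
\[
    Z := \{\, x \in X ~:~ x + \gamma \in X + C' \,\} \subseteq X \,,
\]
which, via the bijection $x \mapsto x+\gamma$, exactly accounts for the overlap in the first set, giving the count $|X+C| = |X+C'| + |X| - |Z|$. The key geometric observation I would establish is the containment $B + Z + \gamma \subseteq (B+X+\gamma) \cap (B+X+C')$: membership in $B+X+\gamma$ is automatic since $Z\subseteq X$, while for $z\in Z$ one has $z+\gamma \in X+C'$, so that $B+z+\gamma \subseteq B+X+C'$. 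Because $B+X+\gamma$ is a translate of $B+X$, counting then yields
\[
    |(B+X+\gamma)\setminus(B+X+C')| = |B+X| - |(B+X+\gamma)\cap(B+X+C')| \le |B+X| - |B+Z| \,.
\]

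The main obstacle, and the precise place where the definition of $R_B[A]$ must be used in the correct direction, is that this estimate carries the term $-|B+Z|$, so that I need a \emph{lower} bound on $|B+Z|$ rather than an upper bound. This is exactly what the minimizing property of $X$ supplies, since $Z\subseteq X\subseteq A$ forces $|B+Z| \ge K|Z|$. Assembling everything, the inductive hypothesis applied to the smaller set $C'$ gives $|B+X+C'| \le K|X+C'|$, and therefore
\[
    |B+X+C| \le |B+X+C'| + |B+X| - |B+Z| \le K|X+C'| + K|X| - K|Z| = K|X+C| \,,
\]
the last equality being the overlap count recorded above. This closes the induction and establishes $|B+C+X| \le R_B[A]\cdot|C+X|$.
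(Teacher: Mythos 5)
Your proof is correct and complete: the induction on $|C|$, with the base case of a single translate, the overlap set $Z := \{x \in X : x+\gamma \in X+C'\}$, the containment $B+Z+\gamma \subseteq (B+X+\gamma)\cap(B+X+C')$, and the minimality of $X$ supplying the lower bound $|B+Z| \ge R_B[A]\,|Z|$ (trivially true also when $Z=\emptyset$), assembles exactly as you claim. Note, however, that the paper itself offers no proof of Theorem \ref{t:Petridis_C}: it is quoted as a known result from Petridis \cite{p}, and what you have written is precisely Petridis's original argument from that reference, so there is nothing in the paper to contrast it with.
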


\bigskip

We conclude the section by Ruzsa's triangle inequality, see e.g. \cite{TV}.
Interestingly, that our proof (developing some ideas of papers \cite{SS1}, \cite{B_RN_S})
does not require any mapping as usual.

\begin{lemma}
    Let $A,B,C\subseteq \Gr$ be any sets.
    Then
\begin{equation}\label{f:triangle_my}
    |C| |A-B| \le |A\times B - \Delta (C)| \le |A-C| |B-C| \,.
\end{equation}
\label{l:triangle_my}
\end{lemma}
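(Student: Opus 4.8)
The plan is to prove the two inequalities separately, the right-hand one being essentially immediate and the left-hand one requiring a Ruzsa-type injection. Throughout, recall that $A\times B - \Delta(C)$ lives in $\Gr \times \Gr$ and consists exactly of the pairs $(a,b)-(c,c) = (a-c,\,b-c)$ with $a\in A$, $b\in B$, $c\in C$.

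For the upper bound $|A\times B - \Delta(C)| \le |A-C|\,|B-C|$, I would simply read off the coordinates of such a pair: the first entry $a-c$ lies in $A-C$ and the second entry $b-c$ lies in $B-C$. Hence $A\times B - \Delta(C) \subseteq (A-C)\times(B-C)$, and taking cardinalities gives the claim. No work is needed beyond this containment.

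For the lower bound $|C|\,|A-B| \le |A\times B - \Delta(C)|$, the idea is to construct an injection from $(A-B)\times C$ into $A\times B - \Delta(C)$. For each $x\in A-B$ I would fix, once and for all, a representation $x = a(x) - b(x)$ with $a(x)\in A$ and $b(x)\in B$, and then define $\Phi(x,c) = (a(x)-c,\, b(x)-c)$. Since $\Phi(x,c) = (a(x),b(x)) - (c,c)$, its image indeed lies in $A\times B - \Delta(C)$, so it remains only to verify injectivity.

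The injectivity check is the sole piece of real content. Suppose $\Phi(x,c) = \Phi(x',c')$. Comparing the two coordinates yields $a(x)-c = a(x')-c'$ and $b(x)-c = b(x')-c'$; subtracting the second equation from the first cancels $c$ and $c'$ and gives $a(x)-b(x) = a(x')-b(x')$, i.e. $x = x'$. Once $x=x'$, the first equation forces $c = c'$. Thus $\Phi$ is injective and $|C|\,|A-B| = |(A-B)\times C| \le |A\times B - \Delta(C)|$, which finishes the proof. I do not expect a genuine obstacle here: the only subtlety is that the representation $x \mapsto (a(x),b(x))$ must be chosen in advance, so that $a(x)-b(x)$ recovers $x$ and thereby separates distinct first arguments, exactly as in the classical Ruzsa triangle inequality.
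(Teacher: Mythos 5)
Your proof is correct, but your lower-bound argument takes a genuinely different route from the paper's. For the upper bound, both you and the paper rely on the trivial containment $A\times B - \Delta(C) \subseteq (A-C)\times(B-C)$. For the lower bound, you build a Ruzsa-type injection $\Phi(x,c) = (a(x)-c,\,b(x)-c)$ from $(A-B)\times C$ into $A\times B - \Delta(C)$, fixing in advance a representation $x = a(x)-b(x)$; your injectivity check (subtract coordinates to recover $x$, then recover $c$) is sound, and this is exactly the classical mapping argument behind Ruzsa's triangle inequality. The paper deliberately avoids any such mapping — it even remarks just before the lemma that, interestingly, its proof ``does not require any mapping as usual.'' Instead it fibers the set over the projection $(x,y)\mapsto z:=x-y \in A-B$ and records the exact identity $|A\times B - \Delta(C)| = \sum_{z\in A-B} |B\cap(A-z) - C|$, each fiber having size at least $|C|$ since it contains a translate $b - C$. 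The paper's identity carries slightly more information than the inequality alone (for instance it would immediately yield the refinement $|A-B|\cdot\min_{z\in A-B}|B\cap(A-z)-C|$ as a lower bound), whereas your injection is the more standard, self-contained argument; both are elementary and both fully prove the stated lemma.
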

\begin{proof}
We have
$$
    |A\times B - \Delta (C)| = \sum_{z\in A-B} |B \cap (A-z) - C| \ge |A-B| |C| \,.
$$
The inequality above is trivial and the identity follows by the projection of points $(x,y) \in A\times B - \Delta (C)$, $(x,y) = (a-c,b-c)$,
$a\in A$, $b\in B$, $c\in C$ onto $z:=x-y=a-b \in A-B$.
This concludes the proof.
$\hfill\Box$
\end{proof}

\bigskip

All logarithms are base $2.$ Signs $\ll$ and $\gg$ are the usual Vinogradov's symbols.

\section{Preliminaries}
\label{sec:preliminaries}


As we discussed in the introduction our proof uses some notions from \cite{s_sumsets}.
So, let us recall the main definition of the paper.

\begin{definition}
    A set $A\subset \Gr$
    has
    {\bf SzT--type}
    (in other words $A$ is  called {\bf Szemer\'{e}di--Trotter set})
    with parameter $\a \ge 1$
    if for any set $B\subset \Gr$ and an arbitrary $\tau \ge 1$ one has
\begin{equation}\label{f:SzT-type}
    |\{ x\in A+B ~:~ (A * B)(x) \ge \tau \}| \ll c (A) |B|^\a \cdot \tau^{-3} \,,
\end{equation}
    where $c (A)>0$ is a constant depends on the set $A$ only.
\label{def:SzT-type}
\end{definition}


Simple calculations (or see \cite{s_sumsets}, Lemma 7) give us some connections between various energies of SzT--type sets.
Formula (\ref{f:Li}) below
is due to Li \cite{Li}.

\begin{lemma}
    Suppose that $A,B,C\subseteq \Gr$ have SzT--type with the same parameter $\alpha$.
    Then
\begin{equation}\label{f:Li}
    \E^{3} (A) \ll \E^2_{3/2} (A) c(A) |A|^{\alpha} \,,
\end{equation}
\begin{equation}\label{f:Li_E}
    \E (A) \ll c^{1/2} (A) |A|^{1+\a/2} \,,
\end{equation}
    and
$$
    \sum_x (A\c A)(x) (B\c B) (x) (C\c C) (x) \ll (c(A) c(B) c(C))^{1/3} (|A| |B| |C|)^{\alpha/3}
        \times
$$
\begin{equation}\label{f:Li'}
        \times \log (\min\{ |A|, |B|, |C| \}) \,.
\end{equation}
\label{l:Li}
\end{lemma}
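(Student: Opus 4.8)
The plan is to deduce all three inequalities from a single distributional bound coming from the SzT--type hypothesis. Applying Definition \ref{def:SzT-type} to the set $A$ with the choice $B=-A$, and using the identity $(A\c A)(x)=(A*(-A))(-x)$ together with $|-A|=|A|$, one gets for every $\tau\ge1$
\begin{equation}\label{f:plan_levelset}
    |\{ x ~:~ (A\c A)(x) \ge \tau \}| \ll c(A) |A|^{\a} \tau^{-3} \,.
\end{equation}
This controls the distribution function of $A\c A$, so every energy $\E_\beta(A)=\sum_x(A\c A)^\beta(x)$ may be read off as an integral of $s^{\beta-1}$ against \eqref{f:plan_levelset}. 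The decisive observation is that the tail $\sum_{(A\c A)(x)\ge\tau}(A\c A)^2(x)$ is governed by $\int_\tau^{|A|} s\cdot c(A)|A|^{\a}s^{-3}\,ds\ll c(A)|A|^{\a}\tau^{-1}$, an integral that converges at infinity and therefore costs \emph{no} logarithm.

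For \eqref{f:Li_E} I would split $\E(A)=\sum_x(A\c A)^2(x)$ at a threshold $\tau$. Below the threshold the trivial identity $\sum_x(A\c A)(x)=|A|^2$ gives $\sum_{(A\c A)<\tau}(A\c A)^2\le\tau|A|^2$, while above it the convergent tail bound above gives $\ll c(A)|A|^{\a}\tau^{-1}$; hence $\E(A)\ll\tau|A|^2+c(A)|A|^{\a}\tau^{-1}$, and optimising at $\tau\asymp c(A)^{1/2}|A|^{\a/2-1}$ yields exactly $\E(A)\ll c(A)^{1/2}|A|^{1+\a/2}$. The bound \eqref{f:Li} is the same mechanism: split $\E(A)$ at $\tau$, estimate the low part through $(A\c A)^2=(A\c A)^{1/2}(A\c A)^{3/2}\le\tau^{1/2}(A\c A)^{3/2}$, so that it is at most $\tau^{1/2}\E_{3/2}(A)$, and estimate the high part by $\ll c(A)|A|^{\a}\tau^{-1}$. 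Choosing $\tau\asymp(c(A)|A|^{\a}/\E_{3/2}(A))^{2/3}$ balances the two terms and gives $\E(A)\ll(c(A)|A|^{\a})^{1/3}\E_{3/2}^{2/3}(A)$, which is \eqref{f:Li} after cubing. What makes both estimates logarithm--free is precisely that the relevant tail integral is $\int s^{-2}\,ds$ rather than the scale--invariant $\int s^{-1}\,ds$.

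For the three--fold estimate \eqref{f:Li'} the natural first move is H\"{o}lder's inequality with exponents $(3,3,3)$, reducing the left--hand side to $\E_3^{1/3}(A)\,\E_3^{1/3}(B)\,\E_3^{1/3}(C)$, combined with the third--moment bound $\E_3(X)\ll c(X)|X|^{\a}\log|X|$ obtained by feeding \eqref{f:plan_levelset} into $\E_3(X)=\sum_x(X\c X)^3(x)$. This already produces the asserted power $(c(A)c(B)c(C))^{1/3}(|A||B||C|)^{\a/3}$. I expect the extraction of a \emph{single} logarithm, namely $\log(\min\{|A|,|B|,|C|\})$ rather than the product of three logarithms that the symmetric H\"{o}lder yields, to be the main obstacle, since the factor $\log|X|$ inside $\E_3(X)$ is genuine (its defining tail integral $\int s^{-1}\,ds$ is scale--invariant and cannot be improved by the argument of the previous paragraph). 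To recover the sharp form I would dyadically decompose only the factor attached to the smallest of the three sets, so that the number of relevant scales, and hence the one unavoidable logarithm, comes from that set alone, and estimate the contribution of the other two factors on each scale by a restricted H\"{o}lder/Cauchy--Schwarz inequality, summing the resulting series geometrically. Arranging these per--scale estimates so that they telescope without reintroducing a separate logarithm for each of the remaining two sets is the technical heart of this part.
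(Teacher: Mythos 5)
Your proofs of (\ref{f:Li}) and (\ref{f:Li_E}) are correct and complete, and they follow the same route as the source the paper relies on (the paper itself gives no argument for this lemma, deferring to Lemma 7 of \cite{s_sumsets}): take $B=-A$ in Definition \ref{def:SzT-type} to get the level-set bound $|\{x:(A\c A)(x)\ge\tau\}|\ll c(A)|A|^\a\tau^{-3}$, split the relevant moment at a threshold, and optimise; the tail $\sum_{(A\c A)\ge\tau}(A\c A)^2\ll c(A)|A|^\a\tau^{-1}$ is indeed log--free, and your choices of $\tau$ balance the two halves correctly. One pedantic point worth recording: Definition \ref{def:SzT-type} only applies for $\tau\ge 1$, so you should check your optimal thresholds are $\ge 1$; this is automatic because $(A\c A)(0)=|A|$ forces $c(A)|A|^\a\gg|A|^3$, whence both thresholds are $\gg|A|^{1/3}$.

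For (\ref{f:Li'}), however, what you have actually proved is only the weaker estimate with $(\log|A|\log|B|\log|C|)^{1/3}$ --- which always dominates $\log(\min\{|A|,|B|,|C|\})$ and can be far larger when the three sets have very different sizes --- and the passage to a single logarithm is left as a plan that you yourself flag as the unproved ``technical heart''; as it stands, this part is a genuine gap. The good news is that your plan is the right one and closes with a tool you already built. The same threshold trick gives, for any finite set $Q$, the log--free restricted moment bound
$$
    \sum_{x\in Q}(B\c B)^2(x) \ll \tau^2|Q| + c(B)|B|^\a\tau^{-1} \ll (c(B)|B|^\a)^{2/3}|Q|^{1/3}\,, \quad\quad \tau=\left(c(B)|B|^\a/|Q|\right)^{1/3}.
$$
Now assume $|A|$ is the minimum, decompose \emph{only} $A\c A$ into dyadic level sets $D_j=\{x:\ 2^j\le (A\c A)(x)<2^{j+1}\}$, $0\le j\ll\log|A|$, and apply Cauchy--Schwarz on each $D_j$:
$$
    \sum_{x\in D_j}(B\c B)(x)(C\c C)(x)\le\left(\sum_{x\in D_j}(B\c B)^2\right)^{1/2}\left(\sum_{x\in D_j}(C\c C)^2\right)^{1/2}\ll (c(B)c(C))^{1/3}(|B||C|)^{\a/3}|D_j|^{1/3}\,.
$$
Since $|D_j|\ll c(A)|A|^\a 2^{-3j}$, the $j$-th level contributes $\ll 2^{j+1}(c(A)c(B)c(C))^{1/3}(|A||B||C|)^{\a/3}2^{-j}$, a quantity independent of $j$, and summing over the $O(\log|A|)$ levels gives exactly (\ref{f:Li'}). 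Note that, contrary to your description, nothing ``telescopes'' across the scales of $A$: the per-scale contributions are flat, and their number is precisely the one logarithm $\log(\min\{|A|,|B|,|C|\})$; the geometric summation you anticipated lives inside the restricted moment bound, over the scales of $B\c B$ and $C\c C$.
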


We need in Lemma 27 from \cite{SS1}.

\begin{lemma}
    Any set $A\subset \R$, $\R = (\R,+)$ has SzT--type with $\a=2$ and
$
    c(A) = |A| d(A)
$,
    where
\begin{equation}\label{f:d(A)}
    d(A) := \min_{C \neq \emptyset} \frac{|AC|^2}{|A| |C|} \,.
\end{equation}
\label{l:d(A)}
\end{lemma}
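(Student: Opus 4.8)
The plan is to verify Definition~\ref{def:SzT-type} for $\alpha=2$ directly, by an Elekes--type application of the Szemer\'edi--Trotter theorem in which the auxiliary set is taken to be the minimizer in (\ref{f:d(A)}). Fix $B\subset\R$ and $\tau\ge 1$, choose a nonempty $C$ attaining the minimum in (\ref{f:d(A)}) (so that $|AC|^2=d(A)|A||C|$, and we may assume $0\notin C$), and write $P_\tau:=\{x\in A+B~:~(A*B)(x)\ge\tau\}$ for the level set we must bound. The guiding idea is that each additive representation $x=a+b$ should be recorded geometrically, while the second coordinate is dilated multiplicatively by elements of $C$ so that the product structure enters through $|AC|$ and hence through $d(A)$.

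Concretely I would take the point set $P:=P_\tau\times AC$, of size $|P_\tau||AC|$, together with the family of lines $\ell_{b,c}:v=c(u-b)$ indexed by $(b,c)\in B\times C$. Since the slope of $\ell_{b,c}$ recovers $c$ and the $u$--intercept then recovers $b$, distinct pairs give distinct lines, so there are exactly $|B||C|$ of them. For any representation $a+b=x$ with $a\in A$, $x\in P_\tau$, and any $c\in C$, the point $(x,ac)=(a+b,ac)$ lies on $\ell_{b,c}$ and belongs to $P$. Reading the line gives $(b,c)$ and reading the point gives $x$, whence $a=x-b$ is forced; thus the map $(\text{representation},c)\mapsto(\text{incidence})$ is injective, and the number $I$ of point--line incidences satisfies $I\ge |C|\sum_{x\in P_\tau}(A*B)(x)\ge\tau|P_\tau||C|$.

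On the other hand the Szemer\'edi--Trotter theorem yields $I\ll\big(|P_\tau||AC|\cdot|B||C|\big)^{2/3}+|P_\tau||AC|+|B||C|$. In the principal regime, where the first term dominates, comparing with the lower bound and cubing gives $\tau^3|P_\tau|^3|C|^3\ll|P_\tau|^2|AC|^2|B|^2|C|^2$, that is $\tau^3|P_\tau||C|\ll|AC|^2|B|^2$. Dividing by $|C|$ and using $|AC|^2/|C|=d(A)|A|$ produces $|P_\tau|\ll d(A)|A||B|^2\tau^{-3}$, which is exactly (\ref{f:SzT-type}) with $\alpha=2$ and $c(A)=|A|d(A)$.

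The main conceptual step is the construction above --- the multiplicative dilation by the minimizing set $C$ is precisely what converts an additive incidence count into the quantity $d(A)$; once this configuration is in place the estimate is a mechanical Szemer\'edi--Trotter computation. The remaining, genuinely routine, point is to dispose of the two degenerate terms $|P_\tau||AC|$ and $|B||C|$ of the incidence bound, which correspond to small values of $\tau$. Here I would appeal to the trivial facts $(A*B)(x)\le|A|$, $\sum_x(A*B)(x)=|A||B|$ (so that $\tau|P_\tau|\le|A||B|$), and $|P_\tau|\le|A+B|$, together with $d(A)\ge 1$, to check by a short case analysis that the claimed bound persists; this bookkeeping, rather than any new idea, is the only place requiring care.
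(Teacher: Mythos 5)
Your construction is correct, but note first that the paper itself contains no proof of Lemma \ref{l:d(A)}: it is imported verbatim as Lemma 27 of \cite{SS1}. Your Elekes-type argument --- points $P_\tau\times AC$, lines $v=c(u-b)$ with $(b,c)\in B\times C$, the injectivity of (representation, $c$) $\mapsto$ incidence giving $I\ge \tau|P_\tau||C|$, then Szemer\'edi--Trotter --- is precisely the standard proof underlying that citation, so you have reconstructed the proof from \cite{SS1} rather than found a new route. The main-term computation is right: $\tau^3|P_\tau||C|\ll |AC|^2|B|^2$ together with $|AC|^2=d(A)|A||C|$ gives (\ref{f:SzT-type}) with $\alpha=2$ and $c(A)=|A|d(A)$. (One side remark: the definition (\ref{f:d(A)}) must be read with $C$ ranging over sets of nonzero reals; if $0\in C$ were allowed, $d(A)$ could be as small as $1/|A|$ and the lemma would be false, so your ``we may assume $0\notin C$'' is forced by the statement rather than a free normalization. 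Likewise, since the paper only remarks that the minimum is attained, it is cleaner to take $C$ with $|AC|^2\le 2\,d(A)|A||C|$, which costs only a constant.)

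The one place where your write-up is genuinely too optimistic is the degenerate case in which the term $|P_\tau||AC|$ dominates the incidence bound. There Szemer\'edi--Trotter only yields $\tau\ll |AC|/|C|$, and the facts you list ($\tau|P_\tau|\le|A||B|$, $\tau\le|A|$, $|P_\tau|\le|A+B|$, $d(A)\ge 1$) do not close this case by themselves: combining $|P_\tau|\le |A||B|/\tau$ with the target bound requires $\tau^2\ll d(A)|B|$, while $\tau\ll|AC|/|C|$ only gives $\tau^2\ll d(A)|A|/|C|$, and $|A|\ll |B||C|$ may fail. The fix is the further trivial observation $|AC|\ge|A|$, which upgrades the case hypothesis to $\tau\ll |AC|/|C|=d(A)|A|/|AC|\le d(A)$. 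Then: if $d(A)\le|B|$, one gets $|P_\tau|\le |A||B|/\tau\ll d(A)^2|A||B|\tau^{-3}\le d(A)|A||B|^2\tau^{-3}$; if instead $d(A)>|B|$, then using $\tau\le|B|$ (since $(A*B)(x)\le\min\{|A|,|B|\}$) one has $d(A)|A||B|^2\tau^{-3}\ge |A||B|^3\tau^{-3}\ge |A||B|/\tau\ge |P_\tau|$ unconditionally. Finally, the case where $|B||C|$ dominates gives $|P_\tau|\ll|B|/\tau$, which is $\ll d(A)|A||B|^2\tau^{-3}$ because $\tau^2\le|A||B|\le d(A)|A||B|$. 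With these additions the case analysis is complete and your proof is valid.
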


So, any set with small multiplicative doubling or, more precisely, with small quantity  (\ref{f:d(A)}) has SzT--type, relatively to addition,  in an effective way.
It can be checked that minimum in (\ref{f:d(A)}) is
actually  attained and we left the fact to an interested  reader.
Careful analysis of our proof gives that we do not need this.
%
%
Another examples of SzT--types sets can be found in \cite{s_sumsets}.

\bigskip

Now let us prove a simple result on $d(A)$, which follows from Petridis's Theorem \ref{t:Petridis_C}.

\begin{lemma}
    Let $A\subseteq \R^{+}$ be a set.
    Then $d(A) = d(A^{-1})$ and
\begin{equation}\label{f:d(AA)}
    d(AA) \le  \frac{|A|^2 d^2 (A)}{|AA||C|} \,,\quad \quad  d(A:A) \le \frac{|A|^2 d^2 (A)}{|A:A||C|} \,,
\end{equation}
    where $C$ is a set
    where
    the minimum in (\ref{f:d(A)}) is attained.
\label{l:d(AA)}
\end{lemma}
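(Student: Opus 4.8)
The plan is to establish the identity $d(A)=d(A^{-1})$ by a change of variables, and then to read off both doubling estimates from Petridis's magnification theorem (Theorem \ref{t:Petridis_C}) applied with the minimiser $C$ of $d(A)$ as the ambient set.

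First I would dispose of $d(A)=d(A^{-1})$. For any finite $C$ one has $|A^{-1}|=|A|$ and $|A^{-1}C|=|(A^{-1}C)^{-1}|=|AC^{-1}|$, so $d(A^{-1})=\min_{C}|AC^{-1}|^2/(|A||C|)$; since $C\mapsto C^{-1}$ is a cardinality-preserving bijection of finite subsets, reindexing the minimum gives $d(A^{-1})=\min_{C}|AC|^2/(|A||C|)=d(A)$.

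For $d(AA)$, fix $C$ attaining the minimum in (\ref{f:d(A)}), so that $|AC|^2=d(A)|A||C|$. Applying Theorem \ref{t:Petridis_C} with ambient set $C$ and multiplier $A$ produces $X\subseteq C$ with $R_A[C]=|AX|/|X|\le|AC|/|C|$ and $|A\cdot B\cdot X|\le R_A[C]\,|BX|$ for every $B$. Choosing $B=A$ gives $|AAX|\le R_A[C]|AX|=R_A[C]^2|X|$, and then feeding $X$ as the test set in the definition of $d(AA)$ yields $d(AA)\le |AAX|^2/(|AA||X|)\le R_A[C]^4|X|/|AA|$. Since $R_A[C]\le|AC|/|C|$ and $|X|\le|C|$, this is at most $|AC|^4/(|C|^3|AA|)$, and substituting $|AC|^4=d^2(A)|A|^2|C|^2$ gives exactly the claimed $|A|^2d^2(A)/(|AA||C|)$.

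The estimate for $d(A:A)$ is meant to run along the same lines, writing $A:A=A\cdot A^{-1}$ and applying Theorem \ref{t:Petridis_C} with multiplier $A$ and $B=A^{-1}$ to get $|AA^{-1}X|\le R_A[C]|A^{-1}X|$ before substituting $X$ into the definition of $d(A:A)$. Here lies the main obstacle: in the $AA$ computation the second factor satisfies $|AX|\le(|AC|/|C|)|X|$ automatically, but the inverse factor $|A^{-1}X|$ is not governed by the magnification ratio $R_A[C]$. In fact the optimality of $C$ forces $|AC|\le|A^{-1}C|$, so this difference-type product can only be larger, and a naive bound loses too much. The way around it, and the reason the identity $d(A)=d(A^{-1})$ is proved first, is to treat the two factors symmetrically: one runs the magnification for $A$ and for $A^{-1}$ in parallel and uses $d(A^{-1})=d(A)$ to keep both ratios bounded by $|AC|/|C|$, so that $AA^{-1}X$ inherits an $R_A[C]^2|X|$-type estimate and the final computation closes exactly as before with $|AA|$ replaced by $|A:A|$. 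Making this symmetric bookkeeping consistent — producing a single subset adapted to both $A$ and $A^{-1}$ rather than to $A$ alone — is the one genuinely delicate step of the proof.
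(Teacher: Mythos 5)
Your first two parts are fine: the reindexing argument for $d(A)=d(A^{-1})$ and the chain $d(AA)\le |AAX|^2/(|AA||X|)\le R_A[C]^4|X|/|AA|\le |AC|^4/(|AA||C|^3)=|A|^2d^2(A)/(|AA||C|)$ are exactly the paper's proof of the first inequality in (\ref{f:d(AA)}). The genuine gap is the second half of the lemma: you never prove the bound for $d(A:A)$, and the remedy you sketch does not exist as stated. "Running the magnification for $A$ and for $A^{-1}$ in parallel" and "producing a single subset adapted to both $A$ and $A^{-1}$" is not something Theorem \ref{t:Petridis_C} can deliver: it produces a subset adapted to \emph{one} multiplier over \emph{one} ambient set, and nothing guarantees a common $X$ with both $|AX|\le (|AC|/|C|)|X|$ and $|A^{-1}X|\le(|AC|/|C|)|X|$. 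Nor does the identity $d(A)=d(A^{-1})$ "keep both ratios bounded by $|AC|/|C|$" over the same ambient set: it only says that $C$ minimises $d(A)$ if and only if $C^{-1}$ minimises $d(A^{-1})$, so the two ratios you want are controlled over \emph{different} ambient sets ($C$ for $A$, $C^{-1}$ for $A^{-1}$), whose Petridis subsets cannot be merged into one test set. You correctly identified the obstacle (indeed $|A^{-1}C|\ge|AC|$ for the minimiser $C$), but your proposed fix is that obstacle restated, and you concede it is unresolved.

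The paper's mechanism is different and needs no simultaneous adaptation. It takes $C$ attaining the minimum for $d(A^{-1})$ (legitimate, since $d(A^{-1})=d(A)$ and minimisers correspond under inversion with equal cardinality) and applies Theorem \ref{t:Petridis_C} to the \emph{single} multiplier $A$ over the ambient set $C^{-1}$, getting $Y$ with $R=R_A[C^{-1}]=|AY^{-1}|/|Y|\le |AC^{-1}|/|C|$ and $|(A:A)Y|\le R|A^{-1}Y|\le R|A^{-1}C|$. The key point you are missing is the inversion identity $|A^{-1}Y|=|AY^{-1}|$: it makes the Petridis ratio and the companion factor powers of one and the same quantity, so the mixed product collapses and the computation closes verbatim as in the $AA$ case with $|A^{-1}C|=|AC^{-1}|$ in place of $|AC|$:
$$
d(A:A)\le \frac{|(A:A)Y|^2}{|A:A|\,|Y|}\le \frac{|A^{-1}Y|^4}{|A:A|\,|Y|^3}\le\frac{|A^{-1}C|^4}{|A:A|\,|C|^3}
=\frac{d^2(A)|A|^2}{|A:A|\,|C|}\,,
$$
using $|A^{-1}Y|/|Y|=R\le|A^{-1}C|/|C|$, $|A^{-1}Y|\le|A^{-1}C|$, and $|A^{-1}C|^2=d(A^{-1})|A||C|=d(A)|A||C|$. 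So the symmetry $d(A)=d(A^{-1})$ enters only through inverting the minimiser and through $|A^{-1}Y|=|AY^{-1}|$, not through any double application of Petridis; without this step your proposal establishes only the $d(AA)$ half of (\ref{f:d(AA)}).
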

\begin{proof}
    The identity $d(A) = d(A^{-1})$ is obvious.
    Let us prove (\ref{f:d(AA)}).
    Suppose that the minimum in (\ref{f:d(A)}) is attained at $C$.
    By Theorem \ref{t:Petridis_C} there is $X\subseteq C$
    such that $|AAX| \le R |AX|$, where $R = R_A [C]$ is defined by formula (\ref{f:R_B[A]}).
    We have
\begin{equation}\label{tmp:24.01.2015_1}
    d(AA) \le \frac{|AAX|^2}{|AA||X|} \le R^2 \frac{|AX|^2}{|AA||X|} = \frac{|AX|^4}{|AA||X|^3}
        \le
            \frac{|AC|^4}{|AA||C|^3}
        =
            \frac{d^2 (A) |A|^2}{|AA| |C|} \,.
\end{equation}
    Similarly, we take $C$ such that the correspondent minimum for $d(A^{-1})$ is attained at $C$.
    Further, let $Y \subseteq C$ is given by Theorem \ref{t:Petridis_C}
    and put $R=R_{A} [C^{-1}]$.
    Then $|(A:A) Y| \le R |A^{-1} Y| \le R|A^{-1} C|$, $R= |AY^{-1}|/|Y| \le |A C^{-1}|/|C|$
    and
    arguments similar to (\ref{tmp:24.01.2015_1}) can be applied.
    This completes the proof.
$\hfill\Box$
\end{proof}

\begin{remark}
    Actually, the proof of Lemma \ref{l:d(AA)} gives us $d(AA) \le \frac{|AC|^4}{|AA||C|^3}$,
    $d(A:A) \le \frac{|AC|^4}{|A:A||C|^3}$ for any nonempty $C$.
\label{r:l_d(AA)}
\end{remark}


\bigskip

Finally, we
formulate
a full version of Theorem \ref{t:Solymosi_intr} from the introduction.

\begin{theorem}
    Let $A,B\subseteq \R$ be sets, $\tau>0$ be a real number.
    Then
\begin{equation}\label{f:Solymosi}
    |\{ x ~:~ |A \cap xB| \ge \tau \}| \ll \frac{|A+A| |B+B|}{\tau^2} \,.
\end{equation}
    In particular
\begin{equation}\label{f:Solymosi_E}
    \E^{\times} (A,B) \ll |A+A| |B+B| \cdot \log (\min\{|A|, |B| \}) \,.
\end{equation}
\label{t:Solymosi}
\end{theorem}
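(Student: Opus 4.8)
The plan is to establish the level-set estimate (\ref{f:Solymosi}) first and then deduce the energy bound (\ref{f:Solymosi_E}) from it by a routine dyadic decomposition. The starting observation is that $|A\cap xB|$ counts exactly the representations $x=a/b$ with $a\in A$, $b\in B$; equivalently, it is the number of points of $A\times B$ lying on the line $\ell_x=\{(xt,t):t\in\R\}$ through the origin. In the same language $\E^{\times}(A,B)=\sum_x|A\cap xB|^2$, which is what makes the two displays equivalent up to the dyadic sum. Before running the geometric argument I would reduce to $A,B\subset\R_{>0}$ by splitting each of $A$ and $B$ into its positive and negative parts: for each fixed sign of $x$, one of the resulting sign-homogeneous pairs carries at least half of the count $|A\cap xB|$, so after reflecting a negative part into the positive axis we may assume all points of $A\times B$ lie in the open first quadrant, at the cost of an absolute constant.

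With this reduction, set $S=\{x:|A\cap xB|\ge\tau\}$ and list its elements in increasing order $s_1<\dots<s_m$, $m=|S|$. For a slope $s$ the points of $A\times B$ on $\ell_s$ number $|A\cap sB|$, and for a consecutive pair $s_i,s_{i+1}$ the two lines $\ell_{s_i},\ell_{s_{i+1}}$ are distinct lines through the origin, hence meet only at $0$. Consequently the sum map $(p,q)\mapsto p+q$ is injective on $\ell_{s_i}\times\ell_{s_{i+1}}$, so the Minkowski sum of the two point sets has exactly $|A\cap s_iB|\cdot|A\cap s_{i+1}B|\ge\tau^2$ elements, each of the form $(a+a',b+b')\in(A+A)\times(B+B)$.

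The heart of the argument, and the step I expect to be the main obstacle, is disjointness across consecutive pairs. The point is that the sum of a first-quadrant vector of slope $s_i$ and one of slope $s_{i+1}$ has slope $(a+a')/(b+b')$, a convex combination (weighted mediant) of $s_i$ and $s_{i+1}$, hence lies strictly between them. Therefore the sums produced by the pair $(s_i,s_{i+1})$ occupy the open angular sector between these two slopes, and for two different consecutive pairs these sectors are disjoint; the corresponding sum-point sets are thus pairwise disjoint subsets of $(A+A)\times(B+B)$. Summing over $i$ gives
$$(m-1)\tau^2\le\sum_{i=1}^{m-1}|A\cap s_iB|\,|A\cap s_{i+1}B|\le|A+A||B+B|,$$
which is exactly (\ref{f:Solymosi}); the sign bookkeeping from the first paragraph only multiplies the bound by a constant.

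Finally, to obtain (\ref{f:Solymosi_E}) I would write $\E^{\times}(A,B)=\sum_x|A\cap xB|^2$ and split the range $1\le|A\cap xB|\le\min\{|A|,|B|\}$ into $O(\log\min\{|A|,|B|\})$ dyadic blocks $|A\cap xB|\sim\tau=2^j$. On each block (\ref{f:Solymosi}) bounds the number of admissible $x$ by $\ll|A+A||B+B|\tau^{-2}$, so the block contributes $\ll\tau^2\cdot|A+A||B+B|\tau^{-2}=|A+A||B+B|$; summing over the logarithmically many blocks yields the stated factor $\log(\min\{|A|,|B|\})$.
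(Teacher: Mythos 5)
Your proposal is correct, but note that the paper itself offers no proof of this statement: it is quoted in the Preliminaries as the full version of Solymosi's theorem from \cite{soly}. Your argument---lines through the origin, injectivity of the sum map on a pair of distinct lines, the mediant $(a+a')/(b+b')$ falling strictly between consecutive slopes to give disjoint angular sectors inside $(A+A)\times(B+B)$, then a dyadic summation for the energy bound---is precisely Solymosi's original proof, so it matches the argument the paper implicitly relies on (the only caveat, inherited from the statement itself rather than from your proof, is the usual convention that $0$ is removed from $A$ and $B$ and that $\tau>1$, since otherwise the level set can be infinite).
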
 

\section{The proof of the main results}
\label{sec:proof}

Our proof relies  on a  partial case of Theorem 14 from \cite{s_sumsets}.

\begin{theorem}
    Suppose that $A,A_*\subset \R$ have SzT--type with the same parameter $\a=2$.
    Then
$$
    |A \pm A_*| \gg
        \max\{
        d (A_*)^{-\frac{1}{3}} d(A)^{-\frac{2}{9}} |A_*|^{\frac{8}{9}} |A|^{\frac{2}{3}},
        d (A)^{-\frac{1}{3}} d(A_*)^{-\frac{2}{9}} |A|^{\frac{8}{9}} |A_*|^{\frac{2}{3}} \,,
$$
\begin{equation}\label{f:p_main_diff_1}
        \min\{
        d (A_*)^{-\frac{2}{27}} d(A)^{-\frac{13}{27}} |A_*|^{\frac{14}{9}},
        d (A)^{-\frac{2}{27}} d(A_*)^{-\frac{13}{27}} |A|^{\frac{14}{9}} \} \}
            \times
        (\log (|A| |A_*|))^{-\frac{2}{9}} \,.
\end{equation}
\label{t:main_diff}
\end{theorem}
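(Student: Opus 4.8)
The plan is to reduce the statement to an upper bound for the mutual additive energy and then to extract the improvement over the exponent $3/2$ from the SzT--type hypothesis used iteratively. Writing $\E(A,A_*) = \sum_x (A\c A)(x)(A_*\c A_*)(x)$, the Cauchy--Schwarz inequality gives $|A|^2|A_*|^2 = \big(\sum_x (A*A_*)(x)\big)^2 \le |A+A_*|\cdot \E(A,A_*)$, together with the analogous bound for the correlation $A\c A_*$ in the case of the difference set, so that
$$|A\pm A_*| \ge \frac{|A|^2|A_*|^2}{\E(A,A_*)} \,.$$
The whole problem thus becomes that of bounding $\E(A,A_*)$ from above. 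I would first collect the inputs furnished by Lemma \ref{l:Li} and Lemma \ref{l:d(A)}: taking the three functions equal in (\ref{f:Li'}) yields the cubic bound $\E_3(A_*) \ll c(A_*)|A_*|^2 \log|A_*| = d(A_*)|A_*|^3\log|A_*|$ and symmetrically for $A$, while (\ref{f:Li}) ties together $\E^3$, $\E_{3/2}^2$ and $c(A)|A|^2$, and (\ref{f:Li_E}) gives $\E(A)\ll d^{1/2}(A)|A|^{5/2}$.

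The core of the argument is to estimate $\E(A,A_*)$ by an asymmetric H\"older split, for instance $\E(A,A_*)\le \E_{3/2}(A)^{2/3}\E_3(A_*)^{1/3}$ together with the transposed inequality obtained by exchanging the roles of $A$ and $A_*$. Feeding the cubic bounds above into the two versions and retaining the better one is what should be responsible for the two symmetric terms $d(A_*)^{-1/3}d(A)^{-2/9}|A_*|^{8/9}|A|^{2/3}$ and $d(A)^{-1/3}d(A_*)^{-2/9}|A|^{8/9}|A_*|^{2/3}$ in the maximum; the denominator $9=3^2$ records one application of the cubic ($\tau^{-3}$) estimate (\ref{f:SzT-type}) inside (\ref{f:Li'}) composed with the H\"older exponent $3$. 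The remaining minimum term, whose weights $\tfrac{2}{27}$, $\tfrac{13}{27}$ carry the denominator $27=3^3$, should come from one further iteration: rather than bounding $\E(A,A_*)$ in a single pass I would decompose the convolution $A*A_*$ dyadically along its level sets $\{x:(A*A_*)(x)\ge\tau\}$, bound each level directly by (\ref{f:SzT-type}), reinsert the first--round estimate, and optimize over the threshold $\tau$, balancing the popular part against the tail.

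The point at which care is essential --- and what I expect to be the main obstacle --- is that none of these estimates improves on the exponent $3/2$ when applied once: the three relations coming from Cauchy--Schwarz, from (\ref{f:Li}), and from the SzT level--set bound share the common fixed point $|A\pm A_*|\asymp |A|^{3/2}$, so a naive bootstrap merely reproduces Solymosi's exponent. Concretely, every bound one writes for $\E_{3/2}$ or $\E_3$ tends to reintroduce the very sumset $|A\pm A_*|$ one is trying to estimate, and the loop closes trivially. Breaking this requires arranging the H\"older exponents and the choice of threshold so that the circular dependence closes strictly in our favour; here I would also expect to use the multiplicative Szemer\'edi--Trotter bound of Theorem \ref{t:Solymosi}, through the factors $d(A)$, $d(A_*)$ hidden in $c(A)=|A|d(A)$, to supply the extra independent relation that the purely additive estimates lack. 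Finally I would track the logarithmic factors through each iteration, checking that the accumulated loss is absorbed by the single factor $(\log(|A||A_*|))^{-2/9}$ appearing in (\ref{f:p_main_diff_1}).
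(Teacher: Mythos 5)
There is a genuine gap, and you in fact name it yourself without closing it. First, a point of reference: the paper contains no proof of Theorem \ref{t:main_diff} at all --- it is imported verbatim as a special case of Theorem 14 of \cite{s_sumsets} (whose argument descends from the convex--sumset machinery of \cite{ss2}, \cite{SS1}) --- so your attempt must be measured against that external proof. Measured so, the problem is that every estimate you actually carry out lives inside the reduction $|A\pm A_*|\ge |A|^2|A_*|^2/\E(A,A_*)$, and that reduction cannot reach the stated exponents. The only upper bounds on energies available to you are consequences of one application of (\ref{f:SzT-type}), and (\ref{f:SzT-type}) is perfectly consistent with the level set at height $\tau_0\sim (c(A)|A_*|/|A|)^{1/2}$ having the full allowed size $c(A)|A_*|^2\tau_0^{-3}$, in which case $\E(A,A_*)\sim c^{1/2}(A)|A|^{1/2}|A_*|^{3/2}$, saturating (\ref{f:Li_E}); Cauchy--Schwarz then returns exactly $|A\pm A_*|\gg d^{-1/2}(A)|A||A_*|^{1/2}$, the $3/2$-type bound, and nothing more. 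Your proposed refinements stay in the same trap: for instance the split $\E(A,A_*)\le \E_{3/2}^{2/3}(A)\,\E_3^{1/3}(A_*)$ together with $\E_3(A_*)\ll d(A_*)|A_*|^3\log|A_*|$ (from (\ref{f:Li'})) and $\E_{3/2}(A)\le |A|\,\E^{1/2}(A)\ll d^{1/4}(A)|A|^{9/4}$ (from (\ref{f:Li_E})) gives only $|A\pm A_*|\gg d^{-1/6}(A)\,d^{-1/3}(A_*)\,|A|^{1/2}|A_*|\log^{-1/3}|A_*|$, which for $|A|=|A_*|$ and $d\sim 1$ is $|A|^{3/2}$: weaker than even the two ``max'' terms of (\ref{f:p_main_diff_1}), let alone $|A|^{14/9}$. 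No rearrangement of H\"older exponents or dyadic thresholds fixes this, because the scheme sees only the mutual energy, and the energy bound is tight for the axiom.

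The idea you are missing --- the actual content of Theorem 14 of \cite{s_sumsets} --- is to make the unknown set $S=A\pm A_*$ itself appear on both sides of an inequality; this, not a cleverer choice of exponents, is what breaks the $3/2$ fixed point. Concretely: for $s$ with $(A_*)_s:=A_*\cap(A_*-s)$ nonempty, the Katz--Koester inclusion \cite{kk} gives $A+(A_*)_s\subseteq S\cap (S-s)$ (similarly for differences); Cauchy--Schwarz on each such fiber gives $|A|^2|(A_*)_s|^2\le \E(A,(A_*)_s)\,|S\cap(S-s)|$; one has the identity $\sum_s \E(A,(A_*)_s)=\sum_x (A\circ A)(x)\,(A_*\circ A_*)^2(x)$, whose right-hand side is controlled by (\ref{f:Li'})-type estimates, i.e.\ by a second, nested application of the SzT hypothesis to the whole family $\{(A_*)_s\}$; and finally $\sum_s|S\cap(S-s)|=|S|^2$. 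A dyadic pigeonhole over the levels $|(A_*)_s|\sim\Delta$, combined with the moment inequalities (\ref{f:Li}) and (\ref{f:Li_E}), then yields a self-referential bound in which $|S|$ occurs with different powers on the two sides; solving it and exploiting the two symmetric roles of $A$ and $A_*$ produces the max/min structure and the exponents $8/9$, $2/3$, $14/9$, $2/27$, $13/27$, with the single loss $(\log(|A||A_*|))^{-2/9}$. None of this machinery (the inclusion into $S$, the summation over fibers $(A_*)_s$, the nested use of the axiom) appears in your sketch. A small additional correction: Theorem \ref{t:Solymosi} plays no role here; the statement concerns abstract SzT sets under addition, and multiplicative information enters only through Lemma \ref{l:d(A)}, i.e.\ through the hypothesis $c(A)=|A|d(A)$, not through the proof.
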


Now we can prove the main result of the paper.

\begin{theorem}
    Let $A\subset \R$ be a finite set.
    Then
\begin{equation}\label{f:main_balog1}
    | A:A +  A | \gg |A|^{\frac{3}{2}+\frac{1}{82}} \cdot (\log |A|)^{-\frac{2}{41}} \,,
\end{equation}
    and
\begin{equation}\label{f:main_balog2}
    |AA + A| \gg
    |AA|^{\frac{11}{41}} |A:A|^{-\frac{11}{41}} |A|^{\frac{62}{41}} (\log |A|)^{-\frac{2}{41}}
    \,.
\end{equation}
\label{t:main_balog}
\end{theorem}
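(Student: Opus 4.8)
The plan is to reduce both bounds to a single dichotomy argument built on Theorem~\ref{t:main_diff}, applied with $A_* = A:A$ (for the first bound) or $A_* = AA$ (for the second). The key observation is that both $A$ and the relevant quotient/product set are SzT--type with parameter $\a=2$ by Lemma~\ref{l:d(A)}, so Theorem~\ref{t:main_diff} applies once we control the quantities $d(A)$ and $d(A:A)$ (or $d(AA)$). The whole game is therefore to track how the multiplicative doubling constant $d(\cdot)$ behaves, and to exploit the fact that if the sumset bound is \emph{bad} then $d$ must be correspondingly \emph{small}.

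First I would set $K := d(A)$, so that by definition there is a nonempty witness set $C$ with $|AC|^2 = K|A||C|$, i.e. $|AC| = (K|A||C|)^{1/2}$. Feeding this $C$ into Lemma~\ref{l:d(AA)} gives
$$
    d(A:A) \le \frac{|A|^2 K^2}{|A:A|\,|C|} \,, \qquad d(AA) \le \frac{|A|^2 K^2}{|AA|\,|C|} \,,
$$
which is the crucial amplification: a small doubling $K$ forces the derived sets to have small doubling as well. I would then substitute these estimates, together with $d(A)=K$, into the two branches of the maximum in \eqref{f:p_main_diff_1}. The $|A:A|$ (or $|AA|$) factor that appears in the denominator of $d(A:A)$ reappears with a favorable sign after substitution, so that the final bound for $|A:A+A|$ comes out purely in terms of $|A|$, $K$, and $|C|$.

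The heart of the argument is the dichotomy on the size of $K=d(A)$. If $K$ is large, then $A$ has large multiplicative doubling, and by Solymosi's Theorem~\ref{t:Solymosi} (via the energy bound \eqref{f:Solymosi_E}, which connects $\E^{\times}(A)$ to additive sumsets) this pushes $|A:A+A|$ up directly, because a large multiplicative energy deficit translates into a large additive image. If $K$ is small, then the amplified bounds on $d(A:A)$ and $d(AA)$ above make the right-hand side of \eqref{f:p_main_diff_1} large. Optimizing the crossover point between these two regimes — balancing the exponent gained from the large-$K$ branch against the exponent gained from the small-$K$ branch — is what produces the specific numbers $\tfrac{1}{82}$ and the $\tfrac{11}{41}$ combination; these are exactly the solutions of the resulting linear system in the exponents.

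The main obstacle I anticipate is the bookkeeping of the witness set $C$: its cardinality $|C|$ is a free parameter that enters all the $d$-bounds, and one must either eliminate it (noting $|C|\le$ some trivial bound, or that it drops out after the two-sided estimate from Remark~\ref{r:l_d(AA)}) or carry it through and optimize over it simultaneously with $K$. A secondary subtlety is that \eqref{f:main_balog2} is stated \emph{unconditionally} in terms of $|AA|/|A:A|$, whereas the introduction's Theorem~\ref{t:main_balog_intr} only claims an improvement over $3/2$ under the hypothesis $|A:A|\ll|AA|$; so I would be careful to present \eqref{f:main_balog2} as the raw inequality and extract the conditional power-saving only afterwards, by substituting $|A:A|\ll|AA|$ to clear the ratio. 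The logarithmic factors track routinely through the $\log$ terms in \eqref{f:Solymosi_E} and \eqref{f:p_main_diff_1}, contributing the $(\log|A|)^{-2/41}$ losses.
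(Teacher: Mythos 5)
Your outer framework --- Lemma~\ref{l:d(A)} to make $A$ and $A:A$ (or $AA$) Szemer\'edi--Trotter sets, Lemma~\ref{l:d(AA)} to transfer doubling information to the quotient/product set, and Theorem~\ref{t:main_diff} to finish --- is exactly the paper's, but both load-bearing inputs to that framework are missing from your sketch, and one of them is misattributed to the wrong tool. The ``large $K$'' branch of your dichotomy does not work as described: Solymosi's inequality~\eqref{f:Solymosi_E} bounds $\E^{\times}(A,B)$ \emph{from above} by $|A+A||B+B|\log(\cdot)$; it gives no lower bound on $|A:A+A|$ in terms of $d(A)$, and large $d(A)$ does not imply large multiplicative energy. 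The engine the paper actually uses is Balog's line construction: order the slopes $q_1<\dots<q_n$ of rich lines $y=q_ix$ through the origin, dilate the points of $A^2$ on the $i$-th line by $\D(A^{-1})$, and add the points of the $(i+1)$-st line; consecutive pairs of lines give disjoint contributions, so $|A:A+A|^2\ge\sum_{i}|A^{\times}_{q_i}|\,|A^{\times}_{q_{i+1}}:A|\gg d^{1/2}(A)\,|A|^{7/2}|A:A|^{-1/2}$. Under the assumption $|A:A+A|\le M|A|^{3/2}$ this forces $d(A)\ll M^4|A:A|/|A|$, which is the only available bridge between $K$ and the sumset. The trivial substitute $|A:A+A|\ge|A:A|\ge|A|\,d(A)^{1/2}$ (take $C=A^{-1}$ in~\eqref{f:d(A)}) is far too weak: balancing it against the small-$K$ branch does not even recover the exponent $3/2$, let alone $3/2+\frac{1}{82}$.

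The second gap is the one you flag yourself but do not resolve: the minimizing set $C$ in~\eqref{f:d(A)} carries no useful lower bound on $|C|$ (only $|C|\ge|A|/d(A)$), so the estimate $d(A:A)\le|A|^2K^2/(|A:A||C|)$ cannot be evaluated. Neither of your escape routes works: you need $|C|$ to be \emph{large}, not bounded above, and $|C|$ does not drop out --- the paper's final computation needs precisely $|C|\gg|A|^2/|A:A|$ together with $d(A)\ll M^4|A:A|/|A|$ so that all powers of $|A:A|$ cancel and one lands on $|A:A+A|\gg|A|^{14/9}M^{-32/9}(\log|A|)^{-2/9}$, hence $M\gg|A|^{1/82}(\log|A|)^{-2/41}$. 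The paper's fix is to take $C$ not as the minimizer but as a rich multiplicative fiber $C=A^{\times}_{q}$, which the \emph{same} line argument manufactures: by averaging over $i$, some fiber satisfies both $|C|\gg|A|^2/|A:A|$ and $|A^{-1}C|\ll M^2|A|$, and then Remark~\ref{r:l_d(AA)} (valid for arbitrary nonempty $C$) plays the role you assign to Lemma~\ref{l:d(AA)}. In short, the single missing idea is Balog's line argument, which simultaneously bounds $d(A)$ and produces the large, multiplicatively poor set $C$ that your substitution into~\eqref{f:p_main_diff_1} requires. A smaller omission: for~\eqref{f:main_balog2} the fibers must be selected at an energy level, giving first a bound in terms of $\E^{\times}_{3/2}(A)$, after which H\"older's inequality $\E^{\times}_{3/2}(A)\ge|A|^3|A:A|^{-1/2}$ produces the $|A:A|^{-11/41}$ factor; this step is also absent from your outline.
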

\begin{proof}
    Put $l=\log |A|$.
    Without loosing of generality, we can assume that $0\notin A$.
    Suppose that $|A:A + A| \ll M|A|^{3/2}$, $|AA + A| \ll M|A|^{3/2}$, where $M$ is a small power of $|A|$, that is $M=|A|^\eps$ and
    obtain a contradiction.
    Let us begin with
    (\ref{f:main_balog1})
    because the proof of the second inequality requires some additional steps.

    Recall the arguments from \cite{balog} or see the proof of Theorem 31 from \cite{SS1}.
    Let $l_i$ be the line
$y=q_ix.$ Thus, $(x,y)\in l_i\cap A^2$ if and only if $x\in  A^\m_q.$
Let $q_1,\dots,q_n\in \Pi \subseteq A/A$ be such  that $q_1<q_2<\dots<q_n$.
Here $\Pi$ is a set which can vary, in principle, and at the moment we choose $\Pi$ such that $|A^\m_{q_i}|\ge 2^{-1} |A|^2/|A/A|$ for all $q_i \in \Pi$.
Thus,
$\sum_{i\in \Pi} |A^\m_{q_i}|\ge \frac12|A|^2$.
 We multiply all points of $A^2$ lying on the line $l_i$ by
$\D(A^{-1})$, so we obtain $|A^\m_{q_i}:A|$ points still belonging to the line $l_i$ and then we consider sumset of
the resulting set with $l_{i+1}\cap A^2.$
Clearly, we
get
$|A^\m_{q_i}:A||A^\m_{q_{i+1}}|$ points from the set $(A:A+A)^2$
lying between the lines $l_i$ and $l_{i+1}$.
Therefore, using the definition of the number $d(A)$, we have
\begin{equation}\label{tmp:21.01.2015_1}
    M^2 |A|^3 \gg |A:A + A|^2 \ge \sum_{i=1}^{n-1} | A^\times_{q_i}||A^\times_{q_{i+1}}:A|
    \ge
        |A|^{1/2} d^{1/2} (A) \sum_{i=1}^{n-1} | A^\times_{q_i}|^{3/2}
            \gg
\end{equation}
\begin{equation}\label{tmp:21.01.2015_2}
            \gg
                |A|^{3/2} d^{1/2} (A) |A:A|^{-1/2} \sum_{i=1}^{n-1} | A^\times_{q_i}|
                    \gg
                        |A|^{7/2} d^{1/2} (A) |A : A|^{-1/2} \,.
\end{equation}
    Thus,
    \begin{equation}\label{f:d(A)_recall}
        d(A) = \min_{C \neq \emptyset} \frac{|AC|^2}{|A||C|}
            \ll \frac{M^4|A/A|}{|A|} \,.
    \end{equation}
    To estimate $d(A:A)$, $d(AA)$ we
    use
    Lemma \ref{l:d(AA)}, see Remark \ref{r:l_d(AA)}.
    In other words, our $C$ is some $A^\times_{q_i}$, where $q_i \in \Pi$.
    After that applying  the first inequality of Theorem \ref{t:main_diff} with $A=A$, $A_* = A:A$
    , we obtain
$$
    M |A|^{3/2}
        \ge
    |A:A + A| \gg |A:A|^{8/9} |A|^{2/3} d^{-2/9} (A) \left( \frac{|A|^2 d^2 (A)}{|A:A||C|} \right)^{-1/3} l^{-2/9}
        =
$$
$$
        =
        |A:A|^{11/9} d^{-8/9} (A) |C|^{1/3} l^{-2/9}
            \gg
                |A|^{14/9} M^{-32/9} l^{-2/9} \,,
$$
and hence $M \gg l^{-2/41} |A|^{1/82}$.
This implies (\ref{f:main_balog1}).

It remains to prove (\ref{f:main_balog2}).
In the case we multiply all points of $A^2$ lying on the line $l_i$ by
$\D(A^{})$, so we obtain $|A A^\m_{q_i}|$ points still belonging to the line $l_i$ and then we consider sumset of
the resulting set with $l_{i+1}\cap A^2.$ Clearly, we obtain $|A A^\m_{q_i}||A^\m_{q_{i+1}}|$ points from the set $(AA+A)^2$.
Thus,
\begin{equation}\label{tmp:21.01.2015_1+}
     M^2 |A|^3 \gg |AA + A|^2 \ge \sum_{i=1}^{n-1} | A^\times_{q_i}||AA^\times_{q_{i+1}}|
\end{equation}
and we repeat the arguments above.
The proof  gives us
\begin{equation}\label{tmp:26.01.2015_1}
    |AA+A| \gg |AA|^{11/41} |A|^{-4/41} (\E^\times_{3/2} (A))^{22/41} l^{-2/41} \,.
\end{equation}
Here we have chosen the set $\Pi$ as $\sum_{q\in \Pi} |A^\times_q|^{3/2} \gg \E^\times_{3/2} (A)$
or, in other words, $|A^\times_q| \gg (\E^\times_{3/2} (A))^2 |A|^{-4}$.
Using the H\"{o}lder inequality, combining with (\ref{tmp:26.01.2015_1}), we get
$$
    |AA + A| \gg |AA|^{\frac{11}{41}} |A:A|^{-\frac{11}{41}} |A|^{\frac{62}{41}} l^{-2/41} \,.
$$
This completes the proof.
$\hfill\Box$
\end{proof}

\begin{remark}
    Using the full power of Theorem 14 from \cite{s_sumsets} one can obtain further results connecting $|AA:A|$, $|A:AA|$ with $|AA + A|$, $|A:A + A|$ and so on. We do not make such calculations.
\end{remark}

The same method allows us to improve the result of Balog concerning the size of $AA+AA$ and $A:A+A:A$.

\begin{theorem}
    Let $A\subset \R$ be a set.
    Then
\begin{equation}\label{f:main_balog1'}
    | A:A +  A:A | \gg
    |A:A|^{\frac{14}{29}} |A|^{\frac{30}{29}} (\log |A|)^{-\frac{2}{29}}
        \,,
\end{equation}
    and
\begin{equation}\label{f:main_balog2'}
    |AA + AA| \gg
    |AA|^{\frac{19}{29}} |A:A|^{-\frac{5}{29}} |A|^{\frac{30}{29}} (\log |A|)^{-\frac{2}{29}}
        \,.
\end{equation}
\label{t:AA+AA}
\end{theorem}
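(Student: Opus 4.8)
The plan is to reproduce the scheme of the proof of Theorem~\ref{t:main_balog} almost line by line; the only genuinely new ingredients are that on each pair of consecutive lines one now dilates \emph{both} lines (not just one), and that at the final step Theorem~\ref{t:main_diff} is fed with the \emph{diagonal} pair $(A:A,A:A)$ (respectively $(AA,AA)$) in place of the mixed pair $(A,A:A)$. Throughout write $R=|A:A|=|A/A|$, $P=|AA|$, $l=\log|A|$, and let $S$ denote the sumset to be bounded, i.e.\ $S=|A:A+A:A|$ for (\ref{f:main_balog1'}) and $S=|AA+AA|$ for (\ref{f:main_balog2'}).

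First I would run the Solymosi--Balog incidence argument of (\ref{tmp:21.01.2015_1}). Decompose $A^2$ into the lines $l_i:y=q_ix$, $q_i\in A/A$, ordered by slope, and after a dyadic pigeonholing keep only those $q_i\in\Pi$ whose level sets $A^\times_{q_i}$ are pairwise comparable and satisfy $\sum_{i\in\Pi}|A^\times_{q_i}|\gg|A|^2$; then $|A^\times_{q_i}|\approx|A|^2/R$. For (\ref{f:main_balog1'}) dilate the points of $A^2$ on \emph{every} line by $\Delta(A^{-1})$: the set on $l_i$ becomes $A^\times_{q_i}:A\subseteq A:A$ and stays on $l_i$, so adding consecutive lines yields
\[
    S^2 \ge \sum_{i}|A^\times_{q_i}:A|\,|A^\times_{q_{i+1}}:A| \,.
\]
Since $d(A)=d(A^{-1})$ gives $|A^\times_q:A|\gg d^{1/2}(A)|A|^{1/2}|A^\times_q|^{1/2}$, comparability of the level sets turns the right-hand side into $\gg d(A)|A|\sum_i|A^\times_{q_i}|\gg d(A)|A|^3$, whence $d(A)\ll S^2|A|^{-3}$. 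For (\ref{f:main_balog2'}) one dilates by $\Delta(A)$ instead, the set on $l_i$ becomes $AA^\times_{q_i}\subseteq AA$, and the same computation again gives $d(A)\ll S^2|A|^{-3}$.

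Next I would apply Theorem~\ref{t:main_diff} with $A=A_*=A:A$ (respectively $A=A_*=AA$); this is admissible because every subset of $\R$ is of SzT--type with $\a=2$ by Lemma~\ref{l:d(A)}. As the two sets coincide, all three terms in (\ref{f:p_main_diff_1}) collapse to
\[
    S \gg d(A:A)^{-5/9}|A:A|^{14/9}l^{-2/9} \qquad\text{resp.}\qquad S \gg d(AA)^{-5/9}|AA|^{14/9}l^{-2/9} \,.
\]
To control $d(A:A)$ and $d(AA)$ I would use Lemma~\ref{l:d(AA)} exactly as in the derivation of (\ref{f:main_balog1}), taking the auxiliary set realizing $d(A)$ to be one of the level sets $A^\times_{q_i}$, so that $|C|\approx|A|^2/R$; this gives $d(A:A)\ll d^2(A)$ and $d(AA)\ll R\,d^2(A)/P$. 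Substituting $d(A)\ll S^2|A|^{-3}$ turns these into $d(A:A)\ll S^4|A|^{-6}$ and $d(AA)\ll RS^4P^{-1}|A|^{-6}$. Plugging these back and solving (the exponent $29=9+20$ arises because $d(\cdot)^{-5/9}$ carries $S^4$ to the power $5/9$, i.e.\ $S^{20/9}$, against the single power of $S$ on the left) yields
\[
    S \gg R^{14/29}|A|^{30/29}l^{-2/29} \qquad\text{and}\qquad S \gg P^{19/29}R^{-5/29}|A|^{30/29}l^{-2/29} \,,
\]
which are precisely (\ref{f:main_balog1'}) and (\ref{f:main_balog2'}); the factor $l^{-2/9}$ becomes $l^{-2/29}$ exactly because a squared incidence bound is balanced against the SzT bound.

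The step I expect to be most delicate is the passage from the raw incidence sum to the clean factor $d(A)|A|^3$. It needs, on the one hand, a dyadic refinement guaranteeing that consecutive level sets are genuinely comparable (otherwise one cannot trade $|A^\times_{q_{i+1}}|^{1/2}$ for $|A^\times_{q_i}|^{1/2}$), and, on the other hand, the bookkeeping of the auxiliary set in Lemma~\ref{l:d(AA)}: one must verify, as in the proof of Theorem~\ref{t:main_balog}, that the set governing $d(A)$ may be taken of size $\approx|A|^2/R$, since it is this identification that produces the clean bounds $d(A:A)\ll d^2(A)$ and $d(AA)\ll R\,d^2(A)/P$ and hence the stated exponents.
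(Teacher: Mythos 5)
Your proposal reproduces the paper's own proof essentially step for step: the same double-dilation Solymosi--Balog argument giving $d(A)\ll S^2|A|^{-3}$, the same diagonal application of Theorem~\ref{t:main_diff} with $A=A_*=A:A$ (resp.\ $AA$), under which all terms of (\ref{f:p_main_diff_1}) collapse to $d(\cdot)^{-5/9}|\cdot|^{14/9}l^{-2/9}$, the same use of Lemma~\ref{l:d(AA)}/Remark~\ref{r:l_d(AA)} with a popular level set $C$ of size $\gg |A|^2/|A:A|$ to get $d(A:A)\ll d^2(A)$ and $d(AA)\ll |A:A|\,d^2(A)/|AA|$, and the same final arithmetic producing the exponents $14/29$, $19/29$, $-5/29$, $30/29$, $-2/29$. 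The one delicate point you flag yourself --- that dyadic pigeonholing formally yields a comparable class of mass $\gg|A|^2/\log|A|$ rather than $\gg|A|^2$, which would slightly worsen only the logarithmic factor --- is glossed over in exactly the same way in the paper's own treatment (in (\ref{tmp:22.01.2015_1}) and in its choice of $\Pi$ and $\Delta$ before (\ref{tmp:22.01.2015_1+})), so your argument is faithful to, and no less rigorous than, the original.
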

\begin{proof}
    As in the proof of Theorem \ref{t:main_balog}, we define $l_i$ to be the line $y=q_ix$
and
$q_1,\dots,q_n\in \Pi \subseteq A/A$ be such  that $q_1<q_2<\dots<q_n$ and $|A^\m_{q_i}|\ge 2^{-1} |A|^2/|A/A|$
for any $q_i \in \Pi$.
Thus, $\sum_i |A^\m_{q_i}|\ge \frac12|A|^2$.
 We multiply all points of $A^2$ lying on all lines $l_i$ by
$\D(A^{-1})$, so we obtain $|A^\m_{q_i}:A|$ points still belonging to the line $l_i$ and then we consider sumset of
the resulting set with itself.
Clearly, we
get
$|A^\m_{q_i}:A||A^\m_{q_{i+1}}:A|$ points from the set $(A:A+A:A)^2$
lying between the lines $l_i$ and $l_{i+1}$. Therefore, we have
$$
     \sigma^2 :=
        |A:A + A:A|^2 \ge \sum_{i=1}^{n-1} | A^\times_{q_i}:A||A^\times_{q_{i+1}}:A|
        \ge
$$
\begin{equation}\label{tmp:22.01.2015_1}
        \ge
            d(A) |A| \sum_{i=1}^{n-1} | A^\times_{q_i}|^{1/2} | A^\times_{q_{i+1}}|^{1/2}
                \gg
                    |A|^3 d(A) \,.
\end{equation}
It gives $d(A) \ll \sigma^2 |A|^{-3}$.
Using Theorem \ref{t:main_diff} with $A=A_* = A/A$, we obtain
\begin{equation}\label{tmp:27.01.2015_1}
    \sigma \gg |A/A|^{14/9} \left( \frac{|A|^2 d^2 (A)}{|A/A||C|} \right)^{-5/9} l^{-2/9}
        \gg
            |A/A|^{19/9} |A|^{-10/9} |C|^{5/9} d^{-10/9} (A)  l^{-2/9}
                \gg
\end{equation}
\begin{equation}\label{tmp:27.01.2015_2}
                \gg
                    |A/A|^{14/9} \sigma^{-20/9} |A|^{10/3} l^{-2/9} \,.
\end{equation}
After some calculations, we
get
$\sigma \gg |A/A|^{14/29} |A|^{30/29} l^{-2/29}$.

To obtain (\ref{f:main_balog2'}) we use  the previous arguments
$$
     \sigma^2 := |AA + AA|^2 \ge \sum_{i\in \Pi} | AA^\times_{q_i}| |AA^\times_{q_{i+1}}|
        \ge
            d(A) |A| \sum_{i\in \Pi} | A^\times_{q_i}|^{1/2} | A^\times_{q_{i+1}}|^{1/2}
                \gg
$$
\begin{equation}\label{tmp:22.01.2015_1+}
    \gg
                    d(A) |A| |\Pi| \Delta \,,
\end{equation}
choosing $\Pi\subseteq A/A$ such that for any $q\in \Pi$ one has
$|A|^2 / |A:A| \ll \Delta \le |A^{\times}_q|$.
Clearly, such set $\Pi$ exists by
simple average arguments.
The
calculations
as in (\ref{tmp:27.01.2015_1})---(\ref{tmp:27.01.2015_2}) give us
\begin{equation*}\label{tmp:27.01.2015_1'}
    \sigma \gg |AA|^{14/9} \left( \frac{|A|^2 d^2 (A)}{|AA|\D} \right)^{-5/9} l^{-2/9}
        \gg
            |AA|^{19/9} |A|^{-10/9} \D^{5/9} d^{-10/9} (A)  l^{-2/9}
                \gg
\end{equation*}
\begin{equation*}\label{tmp:27.01.2015_2'}
                \gg
                    |AA|^{19/9} \sigma^{-20/9} (|\Pi| \D^{3/2})^{10/9} l^{-2/9} \,.
\end{equation*}
After some computations, we
obtain
$$
    \sigma \gg |AA|^{19/29} |A:A|^{-5/29} |A|^{30/29} l^{-2/29} \,.
$$
%
%
%
This concludes the proof.
$\hfill\Box$
\end{proof}



\bigskip

Finally, let us obtain a result on $AA+A$, $AA+AA$ of another type.

\begin{proposition}
    Let $A\subset \R$ be a set.
    Then
\begin{equation}\label{f:1}
    |AA+A|^4 \,,~ |A:A+A|^4 \gg |A|^{-2} (\E^\times_{3/2} (A))^2 \E^{+}_3 (A) \log^{-1} |A| \,,
\end{equation}
    and
\begin{equation}\label{f:2}
    |AA+AA|^2 \,,~ |A:A+A:A|^2 \gg \E^{+}_3 (A) \log^{-1} |A| \,.
\end{equation}
    Moreover
\begin{equation}\label{f:1'}
    |AA+A|^4 \,,~ |A:A+A|^4 \gg \frac{|A|^{10}}{|A:A| |A-A|^2} \,,
\end{equation}
\begin{equation}\label{f:2'}
    |AA+AA|^2 \,,~ |A:A+A:A|^2 \gg \frac{|A|^6}{|A-A|^2} \,.
\end{equation}
\label{p:AA+AA_Solymosi}
\end{proposition}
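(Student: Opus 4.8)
My first move would be an elementary identity that already yields a clean second--moment bound for the \emph{product} sums, which I then try to upgrade. Write $S=AA+AA$ and, for $d\in A-A$, put $A_d=A\cap(A+d)$, so that $|A_d|=(A\c A)(d)$ and $\E^{+}(A)=\sum_d|A_d|^2$. If $a,a'\in A_d$ then $a,a',a-d,a'-d\in A$, and the two numbers
\[
u:=aa'+(a-d)(a'-d),\qquad v:=a(a'-d)+(a-d)a'
\]
both lie in $S$ and satisfy $u-v=d^2$. Since $v=2aa'-d(a+a')$ is determined by the unordered pair $\{a,a'\}$, for each fixed $d$ we obtain $\gg|A_d|^2$ distinct $v\in S\cap(S-d^2)$. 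As $d\mapsto d^2$ is injective for $d>0$, summing gives
\[
|S|^2\ge\sum_{w}|S\cap(S-w)|\ge\sum_{d>0}|S\cap(S-d^2)|\gg\sum_{d>0}|A_d|^2\gg\E^{+}(A)\ge\frac{|A|^4}{|A-A|}\,.
\]
The same scheme (multiplying by $\D(A^{-1})$ in place of $\D(A)$, as in Theorem~\ref{t:AA+AA}) handles $A:A+A:A$.

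To reach the \emph{third} energy in (\ref{f:2}) and the sharper form (\ref{f:2'}) (note $|A|^6/|A-A|^2$ beats $|A|^4/|A-A|$ whenever $|A-A|<|A|^2$), as well as the mixed sums $AA+A$, I would combine the Solymosi line geometry of Theorem~\ref{t:AA+AA} with Theorem~\ref{t:Solymosi}. The geometry gives, with $q_1<\dots<q_n$ in $\Pi\subseteq A/A$,
\[
|AA+AA|^2\ge\sum_{i=1}^{n-1}|AA^{\times}_{q_i}|\,|AA^{\times}_{q_{i+1}}|,\qquad |AA+A|^2\ge\sum_{i=1}^{n-1}|A^{\times}_{q_i}|\,|AA^{\times}_{q_{i+1}}|,
\]
together with the $A:A$ analogues. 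The new input is a lower bound for each factor: since $A^{\times}_{q}=A\cap Aq^{-1}\subseteq A$, the difference form of (\ref{f:Solymosi_E}) yields $\E^{\times}(A,A^{\times}_{q})\ll|A-A|^2\log|A|$, whence Cauchy--Schwarz, $|AA^{\times}_{q}|\ge|A|^2|A^{\times}_{q}|^2/\E^{\times}(A,A^{\times}_{q})$, gives
\[
|AA^{\times}_{q}|\gg\frac{|A|^2|A^{\times}_{q}|^2}{|A-A|^2\log|A|}\,.
\]
Choosing $\Pi$ so that $\sum_{q\in\Pi}|A^{\times}_{q}|^{3/2}\gg\E^{\times}_{3/2}(A)$, exactly as in (\ref{tmp:26.01.2015_1}), collects the multiplicative weight into $\E^{\times}_{3/2}(A)$, which accounts for the factor $(\E^{\times}_{3/2}(A))^2$ in (\ref{f:1}).

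The explicit estimates (\ref{f:1'}) and (\ref{f:2'}) then follow by feeding in the convexity bounds $\E^{+}_3(A)\ge|A|^6/|A-A|^2$ (H\"older, since $\sum_x(A\c A)(x)=|A|^2$ is supported on $A-A$) and $\E^{\times}_{3/2}(A)\ge|A|^3/|A/A|^{1/2}$ (the power--mean inequality over the $|A/A|$ nonzero values of $|A^{\times}_{q}|$); using the sharp level--set form (\ref{f:Solymosi}) rather than its energy corollary keeps these cruder bounds free of the logarithm. The step I expect to be the main obstacle is the genuine appearance of the \emph{additive} higher energy $\E^{+}_3(A)$ in (\ref{f:1}), (\ref{f:2}): the elementary identity above only produces $\E^{+}(A)$, while the line geometry by itself produces multiplicative weights, so one must marry the two uses of Solymosi's theorem and a H\"older step so that the purely additive quantity $\E^{+}_3(A)$ --- and not a weaker second--moment or a multiplicative energy --- emerges with the correct exponents. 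Verifying that this matching is tight, so that it beats the elementary bound for both arithmetic and geometric progressions, is where the real work lies.
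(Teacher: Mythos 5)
Your proposal does not prove the proposition; the decisive gap is that nothing in it can produce the third additive energy $\E^{+}_3(A)$, which is the whole content of (\ref{f:1}) and (\ref{f:2}) --- and you concede as much in your final paragraph. The paper's proof hinges on exactly the mechanism you are missing: every real set has SzT--type with parameter $\alpha=2$ and $c(A)=|A|d(A)$ (Lemma \ref{l:d(A)}), so formula (\ref{f:Li'}) of Lemma \ref{l:Li} applied with $A=B=C$ gives $\E^{+}_3(A)\ll |A|^3 d(A)\log|A|$; testing the minimum in the definition (\ref{f:d(A)}) of $d(A)$ with the sets $C=A^{\times}_s$ and $C=(A^{\times}_s)^{-1}$ then yields, for every $s\in A:A$,
\begin{equation*}
|AA^{\times}_s|^2\,,~|A:A^{\times}_s|^2 \gg |A|^{-2}|A^{\times}_s|\,\E^{+}_3(A)\,\log^{-1}|A| \,,
\end{equation*}
and it is this pointwise bound, inserted into the Solymosi line geometry (which you do set up correctly), that gives (\ref{f:1}) and (\ref{f:2}); the log--free bounds (\ref{f:1'}), (\ref{f:2'}) come from replacing the energy inequality by a direct application of Definition \ref{def:SzT-type} together with H\"older. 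Your substitute input $|AA^{\times}_q|\gg |A|^2|A^{\times}_q|^2 |A-A|^{-2}\log^{-1}|A|$ cannot play this role. First, it relies on a difference--set form of Theorem \ref{t:Solymosi}, which the paper does not contain and which cannot be obtained from Solymosi's argument: that argument uses that sums of points on consecutive rays through the origin fall into disjoint angular sectors, a property that fails for differences. Second, even granting it, feeding it into the geometry gives (with the popular choice of $\Pi$) a bound of the shape $|AA+AA|^2\gg |A|^{12}|A-A|^{-4}|A/A|^{-3}\log^{-2}|A|$, which is in general weaker than (\ref{f:2'}) and becomes trivial when $|A-A|\sim|A|^2$. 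More fundamentally, no estimate expressed solely through $|A\pm A|$, $|A/A|$ and the weights $|A^{\times}_q|$ can dominate $\E^{+}_3(A)$: for $A$ the union of an arithmetic progression and a random set one has $\E^{+}_3(A)\gg |A|^4$ while $|A-A|,|A/A|\sim |A|^2$, so (\ref{f:2}) asserts $|AA+AA|\gg |A|^2\log^{-1/2}|A|$ there, far beyond anything your inputs can deliver.

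Separately, your warm--up identity is also broken. For fixed $d$ one has $v=2aa'-d(a+a')=2(a-d/2)(a'-d/2)-d^2/2$, so $v$ is determined by the single product $(a-d/2)(a'-d/2)$; hence the number of distinct $v$ you produce is $|B_dB_d|$ with $B_d:=A_d-d/2$, which can be of order $|A_d|$ (take $B_d$ a geometric progression), not $\gg|A_d|^2$. Thus even the preliminary claim $|S\cap(S-d^2)|\gg |A_d|^2$, and with it $|S|^2\gg\E^{+}(A)$, is unjustified as stated.
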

\begin{proof}
Put $l=\log |A|$.
Using Lemma \ref{l:Li}, we obtain
for any $A,B$ and $C$
\begin{equation}\label{tmp:27.01.2015_5}
    \sum_x (A\c A)(x) (B\c B) (x) (C\c C) (x) \ll |A| |B| |C| (d(A) d(B) d(C))^{1/3} \log (|A||B||C|) \,.
\end{equation}
In particular case $A=B=C$ of formula above the definition of the number $d(A)$
gives us
\begin{equation}\label{tmp:22.01.2015_2}
    |A A^\times_s|^2 \,,~ |A : A^\times_s|^2 \gg |A|^{-2} |A^{\times}_s| \E^{+}_3 (A) l^{-1}
\end{equation}
for any $s\in A:A$.
Applying (\ref{tmp:21.01.2015_1}), (\ref{tmp:21.01.2015_1+}) and the last bound, we obtain (\ref{f:1}).
Using (\ref{tmp:22.01.2015_2}) one more time and Katz--Koester inclusion \cite{kk}, namely,
\begin{equation}\label{f:kk_m}
    AA^{\times}_s \subseteq AA \cap s AA \,, \quad A:A^{\times}_s \subseteq (A:A) \cap s^{-1} (A:A)
\end{equation}
as well as formula (\ref{f:Solymosi}) of Solymosi's result, we get (\ref{f:2}).
Another way to prove (\ref{f:2}) is just to use formulas  (\ref{tmp:22.01.2015_1}), (\ref{tmp:22.01.2015_1+}),
combining with (\ref{tmp:22.01.2015_2}).

Inequalities
(\ref{f:1'}), (\ref{f:2'}) follow similar to (\ref{f:1}), (\ref{f:2})
from a direct application of Definition \ref{def:SzT-type} and the H\"{o}lder inequality.
This completes the proof.
$\hfill\Box$
\end{proof}

\begin{remark}
    Applying arguments as in the proof (\ref{f:2})
    as well as
    formula (\ref{f:Li_E}) of Lemma \ref{l:Li},
    we obtain a similar bound, namely,
    $$\E^{+} (A) \ll |A| |AA+AA|$$
    (actually, using methods from \cite{Sh_ineq} one can improve the
    inequality).
    It is interesting to compare this estimate with Solymosi's upper bound  for the multiplicative energy  (\ref{f:Solymosi_E}).
    Using formula (\ref{f:Li}) of Lemma \ref{l:Li}, we have also
    $$
        (\E^{+} (A))^{3/2} \E^{\times}_{3/2} (A) \ll \E^{+}_{3/2} (A) |A| |AA+A|^2 \,.
    $$
\label{r:E^+}
\end{remark}

\bigskip

Combining inequality  (\ref{f:2}) with some estimates from \cite{Sh_energy}, we obtain a result in spirit of paper \cite{RN_Z}.

\begin{corollary}
    Let $A\subset \R$ be a set.
    Suppose that
\begin{equation}\label{cond:RN_Z}
    |(A+A)(A+A) + (A+A)(A+A)| \ll |A|^2 \quad \mbox{ and } \quad \E^{+} (A) |A-A| \ll |A|^4 \,.
\end{equation}
    Then
\begin{equation}\label{f:RN_Z}
    |A-A| \ll |A| \log^{4/7} |A| \,.
\end{equation}
    The same holds if one replace sum onto minus and product onto division into the first condition from (\ref{cond:RN_Z}).

    If just the first condition of (\ref{cond:RN_Z}) holds (with plus) then
\begin{equation}\label{f:RN_Z'}
    |A \pm A| \ll |A| \log^{} |A| \,,
\end{equation}
    and if it holds with minus then
\begin{equation}\label{f:RN_Z''}
    |A - A| \ll |A| \log^{} |A| \,,
\end{equation}
    Again, one can replace product onto division in the first condition of (\ref{cond:RN_Z}).
\label{c:RN_Z}
\end{corollary}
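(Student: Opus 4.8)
The plan is to combine inequality (\ref{f:2}) of Proposition \ref{p:AA+AA_Solymosi}, applied to the set $B=A+A$, with a lower bound for $\E^{+}_3(A+A)$ taken from \cite{Sh_energy}. Write $l=\log|A|$ and $B=A+A$. The first hypothesis in (\ref{cond:RN_Z}) is precisely $|BB+BB|\ll|A|^2$, so the first half of (\ref{f:2}) yields the upper bound
\begin{equation*}
    \E^{+}_3(A+A)\ll |A|^4\, l \,.
\end{equation*}
Everything then rests on a matching lower bound for $\E^{+}_3(A+A)$ in terms of $\E^{+}(A)$ and $|A\pm A|$, which is the input drawn from \cite{Sh_energy}; schematically it has the shape
\begin{equation*}
    \E^{+}_3(A+A)\gg \frac{|A\pm A|\,|A|^{21/4}}{\E^{+}(A)^{3/4}}\cdot l^{-c}
\end{equation*}
for an absolute constant $c$. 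Note this is tight for arithmetic progressions (both sides $\asymp|A|^4$), which is exactly the extremal case one expects, since for an arithmetic progression the multiplication-table structure barely satisfies the first condition in (\ref{cond:RN_Z}).

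For the unconditional conclusions (\ref{f:RN_Z'}) and (\ref{f:RN_Z''}), assuming only the first hypothesis, I would insert the trivial estimate $\E^{+}(A)\le|A|^3$ into the denominator above; this collapses the lower bound to $\E^{+}_3(A+A)\gg |A\pm A|\,|A|^3\, l^{-c}$, and comparing with the displayed upper bound gives $|A\pm A|\ll|A|\, l$ at once, which is (\ref{f:RN_Z'}), (\ref{f:RN_Z''}). To obtain the sharper (\ref{f:RN_Z}) I would feed in the second hypothesis. Together with the Cauchy--Schwarz bound $\E^{+}(A)\gs|A|^4/|A-A|$ it pins the energy at its extremal value $\E^{+}(A)\asymp|A|^4/|A-A|$; substituting the resulting \emph{upper} bound $\E^{+}(A)\ll|A|^4/|A-A|$ into the denominator of the Sh\_energy estimate (a smaller $\E^{+}(A)$ only strengthens the lower bound) upgrades it to
\begin{equation*}
    \E^{+}_3(A+A)\gg |A-A|^{7/4}|A|^{9/4}\, l^{-c} \,,
\end{equation*}
so comparison with $\E^{+}_3(A+A)\ll|A|^4 l$ gives $|A-A|^{7/4}\ll|A|^{7/4}\, l^{c+1}$, i.e. $|A-A|\ll|A|\log^{4/7}|A|$, which is (\ref{f:RN_Z}).

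The variants with $+$ replaced by $-$ and product replaced by division are obtained verbatim: one takes $B=A-A$ and invokes the second, ``division'', half of (\ref{f:2}), namely $|B:B+B:B|^2\gg\E^{+}_3(B)\, l^{-1}$, to reach the same upper bound $\E^{+}_3(A-A)\ll|A|^4 l$, after which the identical chain of energy inequalities applies. The logical direction is robust here because the condition $|A-A|\gs|A|$ forces $|A|^4/|A-A|\le|A|^3$, so the second hypothesis always supplies a stronger cap on $\E^{+}(A)$ than the universal one, and hence a genuine improvement of the exponent of $l$ from $1$ to $4/7$.

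The only place where real work is hidden is the lower bound for $\E^{+}_3(A+A)$ from \cite{Sh_energy}. The diagonal term alone gives merely $\E^{+}_3(A+A)\gs((A+A)\c(A+A))(0)^3=|A+A|^3$, which, combined with Ruzsa's inequality $|A+A|\gs(|A-A|\,|A|)^{1/2}$, only yields $\E^{+}_3(A+A)\gs|A-A|^{3/2}|A|^{3/2}$ — smaller than the required $|A-A|\,|A|^3$ by a factor $(|A-A|/|A|^3)^{1/2}<1$, hence far too weak. One genuinely needs the off-diagonal structure of the third energy of a sumset, and it is precisely there that the exponents $21/4$ and $3/4$ (equivalently, the passage from the logarithmic power $1$ to $4/7$) are produced. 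Once that inequality is in hand I expect the tracking of the logarithmic losses coming from (\ref{f:2}) and from the energy estimate to be entirely routine.
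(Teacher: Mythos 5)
Your overall architecture coincides with the paper's: both arguments sandwich $\E^{+}_3(A\pm A)$ between the upper bound $\E^{+}_3(A\pm A)\ll|A|^4\log|A|$, obtained exactly as you do from (\ref{f:2}) and the first hypothesis, and a structural lower bound imported from \cite{Sh_energy}. The genuine gap is that this lower bound is the entire content of the proof, and you only assert it ``schematically''. The inequality the paper actually invokes (Theorem 30 of \cite{Sh_energy}) is
$$
\E^{+}_3(A\pm A)\ \ge\ |A|^{45/4}\,|A-A|^{-1/2}\,\bigl(\E^{+}(A)\bigr)^{-9/4}\,,
$$
which is \emph{not} your inequality $\E^{+}_3(A+A)\gg|A\pm A|\,|A|^{21/4}\,\bigl(\E^{+}(A)\bigr)^{-3/4}$. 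By Cauchy--Schwarz ($\E^{+}(A)\ge|A|^4/|A-A|$ and $\E^{+}(A)\ge|A|^4/|A\pm A|$) one has $\bigl(\E^{+}(A)\bigr)^{3/2}\ge|A|^6\,|A-A|^{-1/2}\,|A\pm A|^{-1}$, so your form \emph{implies} the paper's and is strictly stronger whenever $\E^{+}(A)$ is far above its Cauchy--Schwarz minimum (for $A$ a union of an arithmetic progression and a Sidon set of equal sizes, your form exceeds the cited one by a factor $|A|^{3/2}$). Hence your key input cannot simply be ``taken from \cite{Sh_energy}'' in the form you wrote it: it is an unverified, a priori stronger claim, and nothing in your write-up proves it. Tightness for arithmetic progressions, which you check, does not establish it.

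This matters most for the unconditional statements (\ref{f:RN_Z'}) and (\ref{f:RN_Z''}). With the inequality that is actually available, inserting the trivial bound $\E^{+}(A)\le|A|^3$ gives only $\E^{+}_3(A\pm A)\gg|A|^{9/2}|A-A|^{-1/2}$, and comparing with $|A|^4\log|A|$ yields $|A|\ll|A-A|\log^2|A|$ --- a \emph{lower} bound on $|A-A|$, i.e.\ useless. This is precisely why the paper proves (\ref{f:RN_Z'}), (\ref{f:RN_Z''}) by a different, elementary device: the additive variant of the Katz--Koester inclusion (\ref{f:kk_m}) (Proposition 29 of \cite{Sh_energy}), which gives $|A|^3|A\pm A|\le\E^{+}_3(A+A)$ and $|A|^3|A-A|\le\E^{+}_3(A-A)$ directly, with no reference to $\E^{+}(A)$ at all. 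Your proposal never mentions this step, so your route to (\ref{f:RN_Z'}), (\ref{f:RN_Z''}) stands or falls with the invented inequality. For (\ref{f:RN_Z}) itself, your substitution of $\E^{+}(A)\ll|A|^4/|A-A|$ is sound in direction and reproduces the paper's intermediate bound $\E^{+}_3(A\pm A)\gg|A-A|^{7/4}|A|^{9/4}$, but again only conditionally on the unproven input; note also that your hedge $l^{-c}$ is incompatible with the exact exponents $\log^{4/7}|A|$ and $\log|A|$ in the statement unless $c=0$, and that when the left-hand side is $\E^{+}_3(A+A)$ you need the variant with $|A-A|$ (not $|A+A|$) in the numerator --- a distinction your ``$|A\pm A|$'' glosses over, whereas the paper's cited inequality has $|A-A|^{-1/2}$ built in for both signs.
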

\begin{proof}
    Let us have deal with the
    situation of the sum and the product.
    Another cases can be considered similarly.
    By Theorem 30 from \cite{Sh_energy} and our second condition, one has
    $$
        \E^{+}_3 (A\pm A) \ge |A|^{45/4} |A-A|^{-1/2} (\E^{+} (A))^{-9/4}
            \gg
                |A|^{9/4} |A-A|^{7/4} \,.
    $$
    On the other hand, using formula (\ref{f:2}) from Proposition \ref{p:AA+AA_Solymosi} and
    our first condition, we get
    $$
        |A|^4 \log |A| \gg \E^{+}_3 (A\pm A) \gg |A|^{9/4} |A-A|^{7/4}
    $$
    as required.

    Finally, using the additive variant of Katz--Koester inclusion (\ref{f:kk_m}) (or see Proposition 29 from \cite{Sh_energy}),
    we obtain
    $$
        |A|^3 |A \pm A| \le \E^{+}_3 (A + A) \ll |A|^4 \log |A| \,,
    $$
    and
    $$
        |A|^3 |A - A| \le \E^{+}_3 (A - A) \ll |A|^4 \log |A| \,.
    $$
    This completes the proof.
$\hfill\Box$
\end{proof}

\bigskip

\noindent{I.D.~Shkredov\\
Steklov Mathematical Institute,\\
ul. Gubkina, 8, Moscow, Russia, 119991}
\\
and
\\
IITP RAS,  \\
Bolshoy Karetny per. 19, Moscow, Russia, 127994\\
{\tt ilya.shkredov@gmail.com}

\end{document}